\theoremstyle{plain}
\newtheorem{theorem}{Theorem}[section]
\theoremstyle{definition}
\newtheorem{definition}[theorem]{Definition}
\newtheorem{corollary}[theorem]{Corollary}
\newtheorem{lemma}[theorem]{Lemma}
\newtheorem{proposition}[theorem]{Proposition}
\newtheorem{conjecture}[theorem]{Conjecture}
\newtheorem{remark}[theorem]{Remark}
\definecolor{codegreen}{rgb}{0,0.6,0}
\definecolor{codegray}{rgb}{0.5,0.5,0.5}
\definecolor{codepurple}{rgb}{0.58,0,0.82}
\definecolor{backcolour}{rgb}{0.95,0.95,0.92}
\newcommand{\hsep}{\hspace{3pt}}
\newcommand{\vsep}{

	\vspace{7pt}

}
\newcommand{\tSyl}{\mathrm{Syl}}
\newcommand{\tker}{\mathrm{ker}}
\newcommand{\defeq}{\coloneqq}
\newcommand{\bC}{\mathbb{C}}
\newcommand{\bN}{\mathbb{N}}
\newcommand{\bP}{\mathbb{P}}
\newcommand{\bQ}{\mathbb{Q}}
\newcommand{\bR}{\mathbb{R}}
\newcommand{\bZ}{\mathbb{Z}}
\newcommand{\cH}{\mathcal{H}}
\newcommand{\cP}{\mathcal{P}}
\newcommand{\modsymb}[2]{\lambda({#1},{#2})}
\newcommand{\GU}[1]{\left(\bZ/{#1}\bZ\right)^*}
\newcommand{\tTor}{\mathrm{Tor}}
\newcommand{\tord}{\mathrm{ord}}
\newcommand{\trk}{\mathrm{rk}}
\DeclareFontFamily{U}{wncy}{}
\DeclareFontShape{U}{wncy}{m}{n}{<->wncyr10}{}
\DeclareSymbolFont{mcy}{U}{wncy}{m}{n}
\DeclareMathSymbol{\Sha}{\mathord}{mcy}{"58} 
\DeclareMathSymbol{\Shamalo}{\mathord}{mcy}{"58}
\lstdefinestyle{mystyle}{
	commentstyle=\color{codegreen},
	numberstyle=\tiny\color{codegray},
	basicstyle=\ttfamily\footnotesize,
	breakatwhitespace=false,         
	breaklines=true,                 
	captionpos=b,                    
	keepspaces=true,                 
	numbers=left,                    
	numbersep=5pt,                  
	showspaces=false,                
	showstringspaces=false,
	showtabs=false,                  
	tabsize=2
}
\title{Numerical study of refined conjectures of the BSD type}
\date{}
\author{Juan-Pablo Llerena-C\'ordova}
\email{juan.llerena@ug.uchile.cl}
\begin{document}

\begin{abstract}
In 1987, Mazur and Tate stated conjectures which, in some cases, resemble the classical Birch-Swinnerton-Dyer conjecture and its $p$-adic analog. We study experimentally three conjectures stated by Mazur and Tate using SageMath. Our findings indicate discrepancies in some of the original statements of some of the conjectures presented by Mazur and Tate. However, a slight modification on the statement of these conjectures does appear to hold.
\end{abstract}

\maketitle

\section*{Introduction}
Let $E/\bQ$ be an elliptic curve over the field of rational numbers of conductor $N$. By the Modularity Theorem (\cite{Wil95}, \cite{TW95}, \cite{BCDT01}), we know that $E$ is a modular elliptic curve. Denote by $f_E\in S_2(\Gamma_0(N))$ the rational newform of weight $2$ attached to $E$ by the Modularity Theorem. We define the ``$+$'' modular symbol attached to $E$ at $\frac{a}{b}\in \bQ$ as
\begin{equation*}
	\lambda^+(a,b) \defeq \frac{\pi i}{\Omega_E}\left(\int_{i\infty}^{\frac{a}{b}} f_E(z) dz + \int_{i\infty}^{\frac{-a}{b}} f_E(z) dz\right),
\end{equation*}
where $\Omega_E \defeq \frac{1}{2} \int_{E(\bR)}|\omega_E|$ and $\omega_E$ denotes a N\'eron differential of $E$. As a consequence of the Theorem of Manin-Drinfel'd (\cite{Man72}, \cite{Dri73}) we know that $\lambda^+(a,b)\in \bQ$ for all $\frac{a}{b}\in \bQ$.
\vsep 
Now, by the work of Tate \cite{Tat74} we know that if $E$ has split multiplicative reduction at a prime $p$, there exists a unique $q_p\in \bQ_p^*$, called the \emph{$p$-adic period of $E$}, such that
\[
	E(\bC_p) \cong \bC_p^*/q_p^{\bZ},
\]
as rigid analytic spaces and $\tord_p(q_p) > 0$, where $\tord_p$ denotes the $p$-adic valuation normalized at $p$ \textit{i.e.} $\tord_p(p) = 1$.
\vsep
In \cite{MT87}, Mazur and Tate defined, to which we will refer as, the \emph{Mazur-Tate element at layer $M$} in terms of the modular symbols attached to $E$ (see Definition \ref{Mazur-Tate}).
\vsep
Mazur and Tate stated refined conjectures of the ``Birch-Swinnerton-Dyer type'' concerning the Mazur-Tate element which, under certain restrictions, resemble the classical Birch-Swinnerton-Dyer conjectures \cite{BS65} and its $p$-adic analog, introduced in \cite{MTT96}. Furthermore, in certain cases, the refined conjectures of the Birch-Swinnerton-Dyer type provide information on multiple $p$-adic periods simultaneously, along with the ``interaction'' between them (see \cite[p. 749]{MT87}). This is in contrast to the $p$-adic Birch-Swinnerton-Dyer conjectures which, in some cases, provide information of only one $p$-adic period \cite[Chap. II, \S10]{MTT96}.
\vsep
Besides the numerical evidence presented by Mazur and Tate at the moment of stating their conjectures, most of the numerical calculations found in the literature has been done by Portillo-Bobadilla, in his Ph.D. thesis \cite{Pob04} and in a subsequent article \cite{Pob19}. Specifically, Portillo-Bobadilla studied a slight variation of Conjecture 4 of \cite{MT87}. The objective of the present article is to expand the numerical evidence of some of the conjectures presented in \cite{MT87}.
\vsep
We are mainly interested in Conjecture $6$ of \cite{MT87}, particularly when the layer $M$ is a prime number $p$ for which $E$ has split multiplicative reduction. For an easier implementation in SageMath, we will show in Section $2$ that Conjecture $6$ of \cite{MT87} has a ``multiplicative'' formulation, in the aforementioned case (see Lemma \ref{equivalence_conjectures}). This ``multiplicative'' formulation can be stated as follows.
\begin{conjecture}\label{conjintro}
	Assume that $E$ has split multiplicative reduction at a prime $p$. Define $G_p \defeq \left(\bZ/p\bZ\right)^*/\left<-1\right>$. Let $R\subseteq \bQ$ be the smallest subring such that $\{\lambda^+(a,p)\}_{a\in G_p}\subseteq R$ and $\frac{\lambda^+(0,1)}{2\tord_p(q_p)}\in R$. Also let $\tilde{q}_p \defeq q_p/p^{\tord_p(q_p)} \in \bZ_p^*$, so $\tilde{q}_p$ can be naturally viewed in $\GU{p}$. Define the set of prime numbers $S\subseteq \bN$ by the following property: $\ell\in S$ if and only if $\ell\mid\#G_p$ and $\ell^{-1}\notin R$. Then $\sum_{a\in G_p} \lambda^+(a,p) = 0$ and for all $\ell\in S$ we have that
		\begin{equation}\label{eqconjintro}
			\prod_{a\in G_p} \pi_{\ell}(a)^{\lambda^+(a,p)} \equiv \pi_{\ell}(\tilde{q}_p)^{\frac{\lambda^+(0,1)}{2\tord_p(q_p)}},
		\end{equation}
	where $\pi_{\ell}$ denote the natural projection $\pi_{\ell}:G_p \rightarrow \tSyl_{\ell}(G_p)$ given by the Sylow decomposition of $G_p$. Furthermore, if $\trk_{\bZ}(E(\bQ)) > 0$, then for all $\ell\in S$, both sides of equation \eqref{eqconjintro} are $1$.
\end{conjecture}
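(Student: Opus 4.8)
The strategy is to split Conjecture~\ref{conjintro} into an unconditional identity, an unconditional reduction, and a genuinely conjectural congruence, and to establish only the first two; the congruence \eqref{eqconjintro} is the arithmetic core of the Mazur--Tate conjecture and is precisely what the numerical work below is meant to test, so I do not expect to prove it here.

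\textbf{The vanishing of the sum.} Split multiplicative reduction at $p$ forces $p \parallel N$, so $f_E$ is a $U_p$-eigenform on $\Gamma_0(N)$ with eigenvalue $a_p = 1$. Writing $I(r) \defeq \int_{i\infty}^{r} f_E(z)\,dz$, so that $\lambda^+(a,b) = \frac{\pi i}{\Omega_E}\bigl(I(a/b) + I(-a/b)\bigr)$, I would run the standard coset computation with the matrices $\left(\begin{smallmatrix}1 & j\\ 0 & p\end{smallmatrix}\right)$, $0 \le j \le p-1$: a substitution in the integrals defining $U_p f_E$ gives $\int_{0}^{i\infty}(U_p f_E)(z)\,dz = \sum_{j=0}^{p-1}\int_{j/p}^{i\infty} f_E(z)\,dz$, and using $U_p f_E = a_p f_E$ this rearranges to
\[
  \sum_{j=0}^{p-1} I(j/p) \;=\; a_p\, I(0).
\]
Since $f_E$ is $\Gamma_0(N)$-periodic we have $I(-a/p) = I((p-a)/p)$, so re-indexing $a \mapsto p-a$ turns the previous line into
\[
  \sum_{a \in G_p} \lambda^+(a,p) \;=\; \frac{a_p-1}{2}\,\lambda^+(0,1) \;=\; 0,
\]
because $a_p = 1$. (In particular the Mazur--Tate element $\theta_p$ of Definition~\ref{Mazur-Tate} lies in the augmentation ideal of $\bZ[G_p]$, regardless of the rank of $E$.)

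\textbf{The rank-positive reduction.} Next I would record the classical identity $\lambda^+(0,1) = L(E,1)/\Omega_E$, which follows from the Mellin formula $\int_0^{\infty} f_E(iy)\,dy = L(f_E,1)/(2\pi)$: this gives $I(0) = L(E,1)/(2\pi i)$, hence $\lambda^+(0,1) = \frac{\pi i}{\Omega_E}\cdot 2 I(0) = L(E,1)/\Omega_E$. Now suppose $\trk_{\bZ}(E(\bQ)) > 0$. By the Modularity Theorem together with the theorems of Gross--Zagier and Kolyvagin, $L(E,1) \ne 0$ would force $\trk_{\bZ}(E(\bQ)) = 0$; contrapositively $L(E,1) = 0$, so $\lambda^+(0,1) = 0$. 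Consequently the exponent $\tfrac{\lambda^+(0,1)}{2\,\tord_p(q_p)}$ vanishes, and the right-hand side of \eqref{eqconjintro} equals $\pi_\ell(\tilde q_p)^{0} = 1$ for every $\ell \in S$; granting \eqref{eqconjintro}, the left-hand side is then $1$ as well.

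\textbf{The main obstacle.} What this argument leaves untouched is the congruence \eqref{eqconjintro} itself, which is exactly why the rest of the paper proceeds experimentally. One could hope to prove ``left-hand side $=1$'' directly whenever $\trk_{\bZ}(E(\bQ)) > 0$: the map $\bZ[G_p] \to \tSyl_\ell(G_p)$ sending $\sum_a c_a[a] \mapsto \prod_a \pi_\ell(a)^{c_a}$ (well defined on $R$-coefficients because $\ell^{-1}\notin R$ and $\tSyl_\ell(G_p)$ is an $\ell$-group) annihilates the \emph{square} of the augmentation ideal, since $\pi_\ell$ is a homomorphism; so it would suffice to know that $\theta_p$ lies in that square. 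But that amounts to an order of vanishing at least $2$ at the augmentation ideal, which is the refined Mazur--Tate prediction in the split multiplicative case and is not available unconditionally for curves of rank one. Hence the divisibility and congruence content of \eqref{eqconjintro} is the genuine hard core, and the plan stops where the unconditional methods do.
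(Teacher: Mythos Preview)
The statement is a \emph{conjecture}, and the paper contains no proof of it; on the contrary, the paper's main contribution is a list of numerical counterexamples showing that \eqref{eqconjintro} fails for many pairs $(E,p)$ (Section~3). So there is no ``paper's own proof'' to compare against, and your decision to peel off only the unconditionally provable fragments is exactly the right one.

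Your argument for $\sum_{a\in G_p}\lambda^+(a,p)=0$ via the $U_p$-relation with $a_p=1$ is correct, and it is more than the paper offers: the paper simply cites Bergunde--Gehrmann \cite{BL17} for this fact (Remark~\ref{introremark}) without reproducing an argument. Your rank-positive reduction for the right-hand side is also correct: Kolyvagin's theorem yields $L(E,1)=0$ when $\trk_{\bZ}(E(\bQ))>0$, whence $\lambda^+(0,1)=0$ and the exponent on the right of \eqref{eqconjintro} vanishes. The paper does not isolate this observation explicitly.

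Where you stop is precisely where one must stop. Your final paragraph correctly identifies that the left-hand side equals $1$ in $\tSyl_\ell(G_p)$ if and only if $\theta_p\in I(R,G_p)^2$ --- this is the content of Corollary~\ref{cor2:1} and Remark~\ref{iso} --- and that this second-order vanishing is not available unconditionally. The paper's data confirm your caution in the strongest possible way: not only is \eqref{eqconjintro} unproven, it is \emph{false} as stated, and so is the weaker assertion that the left-hand side equals $1$ whenever $\trk_{\bZ}(E(\bQ))>0$ (Conjecture~\ref{conj4mul}); see e.g.\ $4123.b1$ at $p=7$ in Section~4. So there is no gap in your proposal, only in the conjecture itself.
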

\begin{remark}
	Note that both sides of equation \eqref{eqconjintro} are well-defined. This comes from the fact that if $\frac{a}{b}\in R$, then $(b,\ell) = 1$ for all $\ell\in S$. So, the map $g\mapsto g^b$ is an automorphism in $\tSyl_{\ell}(G_p)$ for all $\ell\in S$. Therefore, the element $g^{\frac{1}{b}}$ can be defined as the preimage of $g$ under the map $g\mapsto g^b$ (see Lemma \ref{Lemma2:2} for a more detailed explanation). 
\end{remark}

Conjecture \ref{conjintro} can be thought as a ``multiplicative'' formualtion of Conjecture 6 of \cite{MT87}. Moreover, Lemma \ref{equivalence_conjectures} implies that Conjecture \ref{conjintro} is equivalent to Conjecture $6$ of \cite{MT87}, when $E$ has split multiplicative reduction at $p$ and the layer of the Mazur-Tate element is $p$.
\vsep
We used SageMath to verify numerically this conjecture for around $400,000$ pairs $(E,p)$, where $E$ is a rational elliptic curve with split multiplicative reduction at the prime $p$. Interestingly, our findings indicate that Conjecture \ref{conjintro} does not hold in general. In our observations, the failure of Conjecture \ref{conjintro} occurs when there exist primes $\ell \in S$ such that $\ell \mid \# E(\bQ)_{\tTor}$ (where $E(\bQ)_{\tTor}$ denotes the torsion subgroup of $E(\bQ)$). However, the converse is not true. There are cases when there exists primes $\ell\in S$ such that $\ell\mid \# E(\bQ)_{\tTor}$ and equation \eqref{eqconjintro} still holds for all $\ell\in S$. The failure of equation \eqref{eqconjintro} for some primes $\ell\in S$ implies that a larger subring $R\subseteq \bQ$ is needed for Conjecture \ref{conjintro} to hold (see Remark \ref{importantRemark} for an explanation of the effect that the ring $R$ has on Conjecture \ref{conjintro}).
\vsep
Also, the fact that Conjecture \ref{conjintro} does not hold in general, implies that Conjecture $6$ of \cite{MT87} also does not hold in general, because they are equivalent if $E$ has split multiplicative reduction at $p$ (see Lemma \ref{equivalence_conjectures}). Furthermore, the degree of failure does entail that Conjecture $4$ of \cite{MT87} also does not hold in general (see the discussion following Conjecture \ref{conj4mul} and Section $3$). In Section $4$, we present two examples of the calculations carried out to check Conjecture \ref{conjintro} and Conjecture \ref{conj4mul}.
\vsep
In spite of this, by adding the hypothesis that $E(\bQ)_{\tTor}^{-1}\in R$ in Conjecture \ref{conjintro} we get a conjecture that does appear to hold, see Conjecture \ref{conjintromod}. We present all our numerical findings in Section 3. 
\vsep
Finally, all the code and documentation used in this article can be found on: \url{https://github.com/JpLlerena/Numerical_study_of_refined_conjectures_of_the_BSD-type}.
\begin{remark}\label{introremark}
Even though numerical evidence of the Conjectures stated in \cite{MT87} does appear to be largely absent in the literature, there exists theoretical results regarding the conjectures stated in \cite{MT87}. More specifically, the condition that $\sum_{a\in G_p} \lambda^+(a,p) = 0$, in Conjecture \ref{conjintro}, was proven by Bergunde and Gehrmann (see the Theorem in the introduction of \cite{BL17}). Also, de Shalit \cite{dSh95} proved that if $E$ has prime conductor, then there exists a subset $S' \subseteq S$ such that equation \eqref{eqconjintro} holds for all $\ell \in S'$ (see Remark \ref{rmkdShalit} for the precise definition of $S'$). Our findings do not contradict any of these results.
\end{remark}
\vsep
The structure of the article is as follows:
\vsep
The first section consists of preliminary results regarding the augmentation ideal of a group algebra. The main objective of this section is to prove Corollary \ref{cor2:1}, which allows an easier implementation of some of the refined conjectures of the Birch-Swinnerton-Dyer type in SageMath.
\vsep
In Section 2, we recall some of the refined conjectures of the Birch-Swinnerton-Dyer type. Furthermore, we prove Lemma \ref{equivalence_conjectures}, which relates Conjecture \ref{conjintro} and Conjecture $6$ of \cite{MT87}. We also state ``multiplicative'' formulations of Conjecture 4 and Conjecture 5 of \cite{MT87} (see Conjecture \ref{conj4mul} and Conjecture \ref{conj5mul}).
\vsep
In Section 3, we present our numerical findings and a slight variation of Conjecture \ref{conjintro}, which does appear to hold (see Conjecture \ref{conjintromod}). Also, we state a slight variation of Conjecture $4$ and Conjecture $6$ of \cite{MT87} which do appear to hold (see Conjecture \ref{conj4mod} and Conjecture \ref{conj6mod}).
\vsep
In Section $4$, we explain, with two examples, the calculations carried out to check Conjecture \ref{conjintro} and Conjecture \ref{conj4mul}.
\vsep
\textit{Acknowledgments}: This article is partially based on the author's master's thesis, done under the supervision of Daniel Barrera, whom we would like to thank for introducing us to these beautiful conjectures. We would like to thank Mihir Deo, Antonio Lei, and Ricardo Toso for feedback on earlier versions of this preprint. The author is most grateful to Ricardo Toso for the fruitful discussions and encouragement during the writing of this article.
\vsep
\textit{During the work done in this article, the author was funded by the National Agency of Research and Development (ANID) - Human Capital Subdirectorate's National Masters scholarship 2022 N° 22221372.}

\section*{Notation and conventions}
We will denote by $\cP\subseteq \bN$ the set of prime numbers. If $G$ is a finite group and $\ell\in \cP$, then $\tSyl_{\ell}(G)$ will denote the $\ell$-Sylow subgroup of $G$, with the convention that if $\ell \nmid \#G$ then $\tSyl_{\ell}(G) = \{e\}$. All rings are assumed to be commutative and have a multiplicative identity. Also, if $R$ denotes a ring and $G$ a group, then $R[G]$ will denote the group algebra of $G$ with coefficients in $R$, and if $g\in G$, then $[g]$ will denote the element $g$ viewed in $R[G]$ \textit{i.e.} $[g]\in R[G]$. Finally, we will use the abbreviation BSD for ``Birch-Swinnerton-Dyer''.

\section{The augmentation ideal of a group algebra}
In this section, $G$ will always denote a finite abelian group (we will use the multiplicative notation for the operation of $G$) and $R$ will always denote a subring of $\bQ$. Also, given a rational number $\frac{a}{b}\in \bQ$ we will always assume that $(a,b) = 1$.
\vsep
The \textbf{augmentation map} $\mathrm{aug}:R[G] \rightarrow R$ is defined as the $R$-linear map uniquely determined by $[g] \mapsto 1; \hsep \forall g\in G$. The \textbf{augmentation ideal of $R[G]$} is defined as $I(R,G)\defeq \tker(\mathrm{aug})$. Also, let $Q_n(R,G) \defeq I(R,G)^n/I(R,G)^{n+1}$ for all $n\in \bN$. 
\begin{lemma}\label{prop1:1}
	Fix $n\in \bN$. Define
	\begin{align*}
		f: R \otimes_{\bZ} Q_n(\bZ,G) &\rightarrow Q_n(R,G)\\
		r\otimes\left(\sum_{g\in G} \alpha_g [g] + I(\bZ, G)^{n+1}\right) &\mapsto \sum_{g\in G} r\alpha_g [g] + I(R, G)^{n+1},
	\end{align*}
	If we consider $R \otimes_{\bZ} Q_n(\bZ,G)$ as an $R$-module, by extension of scalars, then the function $f$ is a well-defined isomorphism of $R$-modules.
\end{lemma}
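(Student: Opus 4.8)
The plan is to deduce everything from the flatness of $R$ as a $\bZ$-module. Since $R$ is a subring of $\bQ$ it is torsion-free over $\bZ$, hence flat (equivalently, $R$ is a localization $\bZ[T^{-1}]$ of $\bZ$ at a set of primes $T$). I would record this observation first, since it is the only non-formal input.

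The second step, which I expect to be the technical heart, is to identify $I(R,G)^m$ with $R\otimes_{\bZ} I(\bZ,G)^m$ as submodules of $R[G]=R\otimes_{\bZ}\bZ[G]$, for every $m\geq 1$. For $m=1$ this follows by tensoring the short exact sequence $0\to I(\bZ,G)\to \bZ[G]\xrightarrow{\mathrm{aug}}\bZ\to 0$ with $R$: flatness keeps it exact, and the resulting surjection $R[G]\to R$ is visibly the augmentation map of $R[G]$, so its kernel $I(R,G)$ is exactly the image of $R\otimes_{\bZ}I(\bZ,G)$, and that image is injected into $R[G]$ by flatness; thus $I(R,G)=R\cdot I(\bZ,G)$. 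For general $m$, I would note that because multiplication on $R[G]=R\otimes_{\bZ}\bZ[G]$ is $R$-bilinear and restricts to the multiplication of $\bZ[G]$, expanding a product of $m$ elements of $I(R,G)=R\cdot I(\bZ,G)$ shows that the ideal power $I(R,G)^m$ is precisely the $R$-submodule of $R[G]$ generated by $I(\bZ,G)^m$, and this $R$-submodule is the image of $R\otimes_{\bZ}I(\bZ,G)^m\to R[G]$, which is again injective by flatness.

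With this in hand the third step is short. Tensoring the exact sequence $I(\bZ,G)^{n+1}\hookrightarrow I(\bZ,G)^{n}\twoheadrightarrow Q_n(\bZ,G)$ with $R$ gives, by right-exactness, an exact sequence $R\otimes_{\bZ}I(\bZ,G)^{n+1}\to R\otimes_{\bZ}I(\bZ,G)^{n}\to R\otimes_{\bZ}Q_n(\bZ,G)\to 0$, and the first arrow is injective by flatness. Substituting the identifications of the second step turns this into $0\to I(R,G)^{n+1}\to I(R,G)^{n}\to R\otimes_{\bZ}Q_n(\bZ,G)\to 0$, whence $R\otimes_{\bZ}Q_n(\bZ,G)\cong I(R,G)^{n}/I(R,G)^{n+1}=Q_n(R,G)$ as $R$-modules. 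Finally I would check that chasing a generator $r\otimes\bigl(\sum_{g}\alpha_g[g]+I(\bZ,G)^{n+1}\bigr)$ through these canonical maps produces $\sum_{g}r\alpha_g[g]+I(R,G)^{n+1}$, so the constructed isomorphism is exactly the map $f$ of the statement; in particular $f$ is well-defined, $R$-linear, and bijective.

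The main obstacle is the second step: one must be a little careful about what ``the $m$-th power of the augmentation ideal, base-changed to $R$'' means, and verify that forming powers of ideals commutes with the flat base change $\bZ\to R$ in the concrete sense above. Everything else is a formal consequence of flatness and right-exactness of $R\otimes_{\bZ}-$. An alternative, if one wished to avoid identifying ideal powers, would be to build an explicit inverse of $f$ by hand, but doing so already requires knowing that a class in $Q_n(R,G)$ has an integral representative up to scaling by $R$, which is essentially the same computation.
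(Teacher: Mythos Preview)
Your argument is correct and follows essentially the same route as the paper: both reduce to the identification $R\otimes_{\bZ} I(\bZ,G)^{m}\cong I(R,G)^{m}$ for $m=n,n+1$, which the paper simply asserts as an isomorphism while you supply the justification via flatness of $R$ over $\bZ$. Your write-up is in fact more complete than the paper's own sketch.
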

\begin{proof}
	The fact that $f$ is a well-defined homomorphism is a direct calculation. The fact that $f$ is bijective is a consequence of the following isomorphism of $R$-modules
	\begin{align*}
		R\otimes_{\bZ} I(\bZ, G)^{n+1} &\rightarrow I(R, G)^{n+1}\\
		r \otimes \sum_{g\in G} \alpha_g [g] &\mapsto \sum_{g\in G} r\alpha_g [g]
	\end{align*}
	where $R \otimes_{\bZ} I(\bZ, G)^{n+1}$ is an $R$-module by extension of scalars.
\end{proof}
By the previous lemma, we can see that to study the group $Q_n(R, G)$ it is sufficient to study the effect of tensoring $Q_n(\bZ, G)$ by $R$. Additionally, we will show that the tensor product $R \otimes_{\bZ} Q_n(\bZ,G)$ can be described explicitly in terms of $\ell$-Sylow subgroups of $Q_n(\bZ,G)$ (see Proposition \ref{Prop2:2}). 
\vsep
Now, let us recall some well-known results.
\begin{lemma}\label{lemma2:1}
	For every $n\in \bN$ the quotient $Q_n(\bZ, G)$ is a finite abelian group.
\end{lemma}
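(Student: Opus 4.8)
The plan is to exploit the well-known filtration of the integral group ring of a finite abelian group. First I would recall that $I(\mathbb{Z}, G)$ is generated as a $\mathbb{Z}$-module by the elements $[g] - [e]$ for $g \in G$, and that more generally $I(\mathbb{Z}, G)^n$ is generated by products $([g_1] - [e]) \cdots ([g_n] - [e])$. Since $G$ is finite, $\mathbb{Z}[G]$ is a finitely generated $\mathbb{Z}$-module, hence Noetherian as a $\mathbb{Z}$-module, so each $I(\mathbb{Z}, G)^n$ and each quotient $Q_n(\mathbb{Z}, G) = I(\mathbb{Z}, G)^n / I(\mathbb{Z}, G)^{n+1}$ is a finitely generated abelian group. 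Thus the content of the lemma is really that $Q_n(\mathbb{Z}, G)$ is \emph{torsion}, equivalently that it is annihilated by some positive integer.

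The cleanest route to this is the classical computation, essentially due to Bass–Milnor or going back to work on the integral group ring filtration, giving $I(\mathbb{Z}, G)^n / I(\mathbb{Z}, G)^{n+1} \cong S_n(G)$, a certain functorial subquotient built from $G$; for a finite abelian group $G = \bigoplus_i \mathbb{Z}/m_i\mathbb{Z}$ one has an explicit description showing $Q_n(\mathbb{Z}, G)$ is a finite abelian group whose exponent divides a product of the $m_i$. I would instead give a self-contained argument: since $g^{|G|} = e$ in $G$, for each $g$ the element $([g] - [e])$ satisfies a relation coming from expanding $0 = [g^{|G|}] - [e] = ([e] + ([g]-[e]))^{|G|} - [e]$ via the binomial theorem, which shows $|G| \cdot ([g] - [e]) \in I(\mathbb{Z}, G)^2$. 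Iterating and multiplying such relations for a spanning set of generators of $I(\mathbb{Z},G)^n$, one finds a fixed power of $|G|$ (depending only on $n$ and $G$) annihilating $Q_n(\mathbb{Z}, G)$. Combined with finite generation, this forces $Q_n(\mathbb{Z}, G)$ to be finite.

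Concretely, the steps in order are: (1) observe $\mathbb{Z}[G]$ is a finitely generated abelian group, hence so are $I(\mathbb{Z},G)^n$ and $Q_n(\mathbb{Z},G)$; (2) establish the key relation $|G| \cdot ([g]-[e]) \equiv 0 \pmod{I(\mathbb{Z},G)^2}$ for every $g \in G$, via the binomial expansion of $([e]+([g]-[e]))^{|G|}$; (3) deduce by multiplicativity of the filtration that $|G| \cdot I(\mathbb{Z},G)^n \subseteq I(\mathbb{Z},G)^{n+1}$, so $|G|$ annihilates $Q_n(\mathbb{Z},G)$; (4) conclude that a finitely generated abelian group killed by $|G|$ is finite.

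The main obstacle is step (3): one must be careful that an element of $I(\mathbb{Z},G)^n$ is a $\mathbb{Z}$-linear combination of products $x_1 \cdots x_n$ with each $x_j = [g_j]-[e]$, and that multiplying such a product by $|G|$ lands in $I(\mathbb{Z},G)^{n+1}$ — this follows by applying step (2) to the first factor, $|G|\cdot x_1 \in I(\mathbb{Z},G)^2$, and noting $I(\mathbb{Z},G)^2 \cdot I(\mathbb{Z},G)^{n-1} \subseteq I(\mathbb{Z},G)^{n+1}$. I expect no difficulty beyond bookkeeping, since $G$ abelian makes the filtration powers behave well and the generation statements are standard.
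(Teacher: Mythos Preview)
Your proof is correct and follows the same two-step structure as the paper's argument: establish that $Q_n(\bZ,G)$ is finitely generated, then show it is torsion. For finite generation the paper writes down the explicit generating set $\{[g]-[e]\}_{g\in G}$ of $I(\bZ,G)$ (your Noetherian argument is equivalent). The only real difference is in the torsion step: the paper simply cites an external reference for the fact that every element of $Q_n(\bZ,G)$ has finite order, whereas you supply the self-contained binomial-expansion argument showing $|G|\cdot I(\bZ,G)^n\subseteq I(\bZ,G)^{n+1}$. Your version is more informative, since it exhibits an explicit annihilator $|G|$ of $Q_n(\bZ,G)$ rather than merely asserting torsion; the paper's version is shorter by outsourcing this to the literature.
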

\begin{proof}
	We have that $X = \{[g] - [e]; \hsep g\in G\}$	is a basis of $I(\bZ,G)$ (see \cite[Exercise 8, pag. 159]{Joh97}). So, $I(\bZ,G)^n$ is generated by finite products of elements of $X$. Therefore, $I(\bZ,G)^n$ is finitely generated. As a consequence, $Q_n(\bZ, G)$ is also finitely generated.
	\vsep
	Finally, because $Q_n(\bZ, G)$ is finitely generated and every element has finite order (see \cite[Proposition 2.1]{CG11}), we can conclude that $Q_n(\bZ, G)$ is a finite abelian group.
\end{proof}
\begin{lemma}\label{Lemma2:2}
	We have the following results:
	\begin{enumerate}
		\item\label{trivial1} If $G$ is a $p$-group and $p^{-1}\in R$ then $R \otimes_{\bZ} G$ is trivial.
		\item\label{nontrivial} If for all primes $p\mid\#G$ we have that $ p^{-1}\notin R$, then $G$ has a natural structure of $R$-module. Furthermore, if we endow $R\otimes_{\bZ} G$ with the structure of $R$-modules by extension of scalars, then $R\otimes_{\bZ} G \cong G$ as $R$-modules.
	\end{enumerate}
\end{lemma}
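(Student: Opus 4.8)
The plan is to treat the two parts separately, since they are essentially the two extreme cases of how a finite abelian group $G$ interacts with a subring $R \subseteq \bQ$ under $R \otimes_{\bZ} (-)$.

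For part \eqref{trivial1}, suppose $G$ is a $p$-group, say $\#G = p^k$, and $p^{-1} \in R$. I would argue that every simple tensor $r \otimes g$ vanishes. The point is that $g$ has order dividing $p^k$, so $p^k g = 0$ in $G$ (written additively), hence $r \otimes g = r \otimes p^k (p^{-k} g) $ does not quite parse; instead write $r \otimes g = (r p^{-k}) \otimes (p^k g) = (r p^{-k}) \otimes 0 = 0$, using that $p^{-k} = (p^{-1})^k \in R$ so $r p^{-k}$ is a legitimate element of $R$, and that we may move the integer $p^k$ across the tensor product. Since every generator of $R \otimes_{\bZ} G$ is zero, the module is trivial. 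This is the easy direction and should only take a couple of lines.

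For part \eqref{nontrivial}, assume $p^{-1} \notin R$ for every prime $p \mid \#G$. I would first construct the $R$-module structure on $G$: given $\frac{a}{b} \in R$ with $(a,b) = 1$, the hypothesis forces $(b, \#G) = 1$, so multiplication by $b$ is an automorphism of $G$ (it is injective, hence bijective, on the finite group $G$), and I define $\frac{a}{b} \cdot g \defeq a \cdot (\mu_b^{-1}(g))$ where $\mu_b$ is that automorphism. One checks this is well-defined (independent of the representation $\frac{a}{b}$, using that two representations differ by scaling numerator and denominator by a common integer coprime to $\#G$) and satisfies the module axioms — routine verifications I would not grind through. Then I would exhibit the isomorphism $R \otimes_{\bZ} G \cong G$ as $R$-modules: the natural map $R \otimes_{\bZ} G \to G$, $r \otimes g \mapsto r \cdot g$ (the scalar action just defined), is $R$-linear and surjective since $1 \otimes g \mapsto g$; for injectivity, note the inclusion $\bZ \hookrightarrow R$ together with the $\bZ$-module structure gives a map $G = \bZ \otimes_{\bZ} G \to R \otimes_{\bZ} G$, $g \mapsto 1 \otimes g$, and I claim this is a two-sided inverse. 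Surjectivity of $g \mapsto 1 \otimes g$ follows because any simple tensor $\frac{a}{b} \otimes g$ equals $1 \otimes (\frac{a}{b} \cdot g)$: indeed, writing $h = \mu_b^{-1}(g)$ so that $b h = g$, we get $\frac{a}{b} \otimes g = \frac{a}{b} \otimes b h = \frac{ab}{b} \otimes h = a \otimes h = 1 \otimes (a h) = 1 \otimes (\frac{a}{b} \cdot g)$. Composing the two maps both ways yields the identity, so they are mutually inverse isomorphisms of abelian groups, and $R$-linearity of $g \mapsto 1 \otimes g$ is immediate from the computation just done.

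The main obstacle, such as it is, is bookkeeping rather than depth: I must be careful that the scalar action $\frac{a}{b} \cdot g$ is well-defined — this rests on the elementary fact that if $\frac{a}{b} = \frac{a'}{b'}$ in lowest terms with both $b, b'$ coprime to $\#G$, then $\mu_b^{-1}$ and $\mu_{b'}^{-1}$ interact correctly with multiplication by $a$ and $a'$ — and that moving elements across the tensor product $R \otimes_{\bZ} G$ is justified by the defining bilinearity relations (in particular $r \otimes n g = rn \otimes g$ for $n \in \bZ$, and $\frac{rn}{n} \otimes g = r \otimes g$ needs $n$ invertible in the sense that $\frac{rn}{n}$ is literally the element $r$ of $R$). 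Everything else is a formal consequence of $(b, \#G) = 1 \Leftrightarrow \mu_b \in \Aut(G)$ for finite $G$.
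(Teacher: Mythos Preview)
Your proof is correct and follows essentially the same approach as the paper: for part \eqref{trivial1} the paper simply says ``direct calculation'' (your $r \otimes g = (rp^{-k}) \otimes (p^k g) = 0$ is exactly that calculation), and for part \eqref{nontrivial} the paper defines the $R$-action by $g^{a/b} \defeq (g^{1/b})^a$ via the automorphism $g \mapsto g^b$ and then asserts that $r \otimes g \mapsto g^r$ is an isomorphism, which is precisely your argument in multiplicative rather than additive notation. If anything, you supply more detail than the paper does, in particular the explicit inverse $g \mapsto 1 \otimes g$ and the verification $\frac{a}{b} \otimes g = 1 \otimes (\frac{a}{b} \cdot g)$.
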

\begin{proof}
	Item (\ref{trivial1}) follows from a direct calculation. 
	\vsep
	For item $(\ref{nontrivial})$ we first have to define the structure of $R$-module on $G$. Note that for any $\frac{a}{b}\in R\subseteq \bQ$, we have that $(\# G, b) = 1$. Thus $g\mapsto g^b$ is an automorphism of $G$. So, for each $g\in G$ there exists a unique $h\in G$ such that $h^b = g$, such $h$ will be denoted by $g^{\frac{1}{b}}$. This allows us to define the action of $R$ in $G$ as follows: if $ r = \frac{a}{b}\in R$ and $g\in G$ then $r\cdot g \defeq g^{\frac{a}{b}} \defeq (g^{\frac{1}{b}})^a$. A direct calculation shows that this action gives $G$ a structure of $R$-module\footnote{An example of this $R$-module structure is the following: Let $G = \bZ/N\bZ$ for some $N\in\bN$. The structure of $R$-module is given by the usual multiplication i.e. if $\frac{a}{b}\in R$ then $\frac{a}{b}\cdot n \defeq \frac{an}{b} \pmod{N}$ for all $n\in \bZ/N\bZ$, and the fraction is well-defined because $(b,N) = 1$.}.
	\vsep
	Lastly, a direct calculation shows that, the map uniquely determined by $r\otimes g \mapsto g^r$ defines an isomorphism of $R$-modules between $R\otimes_{\bZ} G$ and $G$.
\end{proof}
\begin{proposition}\label{Prop2:2}
	Fix $n\in\bN$. Let $S\subseteq \cP$ be the set of prime numbers defined as follows: $\ell\in S$ if and only if $\ell \mid \#G$ and $\ell^{-1}\notin R$. Then
	\[
		Q_n(R,G) \cong \bigoplus_{\ell \in S} \tSyl_{\ell}\left(Q_n(\bZ,G)\right),
	\]
	as $R$-modules, where the structure of $R$-module of the right hand-side is given by item (\ref{nontrivial}) of the previous lemma.
\end{proposition}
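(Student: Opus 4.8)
The plan is to reduce to a statement about tensor products via Lemma \ref{prop1:1}, then decompose the finite abelian group $Q_n(\bZ,G)$ into its Sylow pieces and feed each piece through the appropriate case of Lemma \ref{Lemma2:2}. By Lemma \ref{prop1:1} there is an isomorphism of $R$-modules $Q_n(R,G)\cong R\otimes_{\bZ}Q_n(\bZ,G)$, and by Lemma \ref{lemma2:1} the group $A\defeq Q_n(\bZ,G)$ is finite abelian, so $A\cong\bigoplus_{\ell\in\cP}\tSyl_{\ell}(A)$ (a finite direct sum). Since $R\otimes_{\bZ}-$ commutes with finite direct sums, this gives $Q_n(R,G)\cong\bigoplus_{\ell\in\cP}\bigl(R\otimes_{\bZ}\tSyl_{\ell}(A)\bigr)$ as $R$-modules.

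Next I would analyze each summand. Fix $\ell\in\cP$. If $\ell^{-1}\in R$, then $\tSyl_{\ell}(A)$ is an $\ell$-group and item (\ref{trivial1}) of Lemma \ref{Lemma2:2} gives $R\otimes_{\bZ}\tSyl_{\ell}(A)=0$. If $\ell^{-1}\notin R$, then $\ell$ is the only prime dividing $\#\tSyl_{\ell}(A)$, so the hypothesis of item (\ref{nontrivial}) of Lemma \ref{Lemma2:2} is satisfied and $R\otimes_{\bZ}\tSyl_{\ell}(A)\cong\tSyl_{\ell}(A)$ as $R$-modules, the target carrying precisely the $R$-module structure named in the statement. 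Hence $Q_n(R,G)\cong\bigoplus_{\ell\,:\,\ell^{-1}\notin R}\tSyl_{\ell}(A)$ as $R$-modules.

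To finish I must match this index set with $S$, i.e. check that $\tSyl_{\ell}(A)=\{e\}$ whenever $\ell\nmid\#G$ (then the nonzero summands are exactly those with $\ell\in S$, and the extra terms $\tSyl_{\ell}(A)$ with $\ell\in S$ but $\ell\nmid\#A$ are trivial, so they may be harmlessly added). For this I would show that $\#G$ annihilates $Q_n(\bZ,G)$ for every $n\ge 1$: with $I\defeq I(\bZ,G)$ and $N\defeq\sum_{g\in G}[g]\in\bZ[G]$, one has $([g]-[e])\,N=0$ for all $g\in G$, and since $N-\#G\,[e]=\sum_{g\in G}([g]-[e])\in I$ this rearranges to
\begin{equation*}
	\#G\,\bigl([g]-[e]\bigr)=-\bigl([g]-[e]\bigr)\bigl(N-\#G\,[e]\bigr)\in I^2 .
\end{equation*}
As the elements $[g]-[e]$ generate $I$ as an abelian group (as used in the proof of Lemma \ref{lemma2:1}), this yields $\#G\cdot I\subseteq I^2$, hence $\#G\cdot I^n\subseteq I^{n+1}$ for all $n\ge 1$, hence $\#G\cdot A=0$. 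In particular every prime dividing $\#A$ divides $\#G$, which is exactly what is needed.

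The bookkeeping in the first two paragraphs is formal once Lemmas \ref{prop1:1} and \ref{Lemma2:2} are in hand; the only genuine content is the last step, controlling which primes can divide $\#Q_n(\bZ,G)$. The short norm-element computation above settles it, and that is the step I would expect to be the crux of the argument.
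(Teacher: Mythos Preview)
Your proof is correct and follows the same route as the paper: invoke Lemma \ref{prop1:1}, use Lemma \ref{lemma2:1} to decompose $Q_n(\bZ,G)$ into its Sylow pieces, distribute the tensor product, and apply the two cases of Lemma \ref{Lemma2:2}. The only difference is that the paper stops there, while you add the norm-element computation showing that $\#G$ annihilates $Q_n(\bZ,G)$, so that primes not dividing $\#G$ contribute only trivial Sylow summands and the index set can safely be cut down to $S$. The paper leaves this step implicit; your extra care here is warranted and the argument you give is clean and correct.
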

\begin{proof}
	On one hand, by Lemma \ref{lemma2:1} we known that $Q_n(\bZ, G)$ is a finite abelian group, henceforth it has a natural decomposition in terms of its $\ell$-Sylow subgroups. So,
	\begin{equation}\label{isoeq}
		Q_n(\bZ,G) \cong \bigoplus_{\ell \in \cP} \tSyl_{\ell}\left(Q_n(\bZ,G)\right).
	\end{equation}
	On the other hand, we know that the tensor product distributes with the direct sum. Thus, tensoring by $R$ the previous equation, we get
	\[
		R \otimes_{\bZ} Q_n(\bZ,G) \cong R \otimes_{\bZ} \left(\bigoplus_{\ell \in \cP} \tSyl_{\ell}\left(Q_n(\bZ,G)\right)\right) \cong \bigoplus_{\ell \in \cP} R \otimes_{\bZ} \tSyl_{\ell}\left(Q_n(\bZ,G)\right).
	\]
	So, using Lemma \ref{prop1:1} and Lemma \ref{Lemma2:2} in the previous equation we can conclude the proposition.
\end{proof}
By Proposition \ref{Prop2:2}, we can see that, to understand $Q_n(R, G)$ it is sufficient to understand the non-trivial $\ell$-Sylow subgroups of $Q_n(\bZ, G)$ for the primes $\ell$ that are not invertible in $R$.
\vsep
For our purposes, we are mainly interested in the particular case of $Q_1(R, G)$. In this case, we can further describe $Q_1(R, G)$ in terms of the $\ell$-Sylow subgroups of $G$.
\begin{corollary}\label{cor2:1}
	Let $S\subseteq \cP$ be the set of prime numbers defined as follows: $\ell\in S$ if and only if $\ell \mid \#G$ and $\ell^{-1}\notin R$. Then
	\begin{equation}\label{isoeqR}
		Q_1(R,G) \cong \bigoplus_{\ell \in S} \tSyl_{\ell}\left(G\right),
	\end{equation}
	as $R$-modules, where the structure of $R$-module of the right handside is given by item (\ref{nontrivial}) of Lemma \ref{Lemma2:2}.
\end{corollary}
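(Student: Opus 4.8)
The plan is to reduce the statement to Proposition \ref{Prop2:2} and then identify $Q_1(\bZ,G)$ with $G$ itself. Applying Proposition \ref{Prop2:2} with $n = 1$ gives an isomorphism of $R$-modules
\[
	Q_1(R,G) \cong \bigoplus_{\ell \in S} \tSyl_{\ell}\left(Q_1(\bZ,G)\right),
\]
so the corollary follows once we produce an isomorphism of abelian groups $Q_1(\bZ,G) \cong G$ whose restriction to each $\ell$-Sylow piece is compatible with the $R$-module structure of item (\ref{nontrivial}) of Lemma \ref{Lemma2:2}.

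First I would recall the classical identification $Q_1(\bZ,G) = I(\bZ,G)/I(\bZ,G)^2 \cong G$. Consider the map $\varphi \colon G \to Q_1(\bZ,G)$ sending $g \mapsto [g] - [e] + I(\bZ,G)^2$. From the identity $[gh] - [e] = ([g] - [e]) + ([h] - [e]) + ([g]-[e])([h]-[e])$ together with $([g]-[e])([h]-[e]) \in I(\bZ,G)^2$, the map $\varphi$ is a group homomorphism; it is surjective because $\{[g]-[e] : g \in G\}$ generates $I(\bZ,G)$ (as recalled in the proof of Lemma \ref{lemma2:1}). Injectivity follows by exhibiting a left inverse: since $\{[g]-[e]\}_{g\in G}$ is a $\bZ$-basis of $I(\bZ,G)$, there is a well-defined group homomorphism $I(\bZ,G) \to G$ determined by $[g]-[e] \mapsto g$, and it kills $I(\bZ,G)^2$ because $([g]-[e])([h]-[e]) = ([gh]-[e]) - ([g]-[e]) - ([h]-[e]) \mapsto gh\,g^{-1}h^{-1} = e$ (using that $G$ is abelian); hence it descends to a two-sided inverse $Q_1(\bZ,G)\to G$ of $\varphi$. (Alternatively one may simply cite this standard fact.)

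Next, for each $\ell \in S$ the isomorphism $\varphi$ restricts to an isomorphism of abelian groups $\tSyl_{\ell}\left(Q_1(\bZ,G)\right) \xrightarrow{\ \sim\ } \tSyl_{\ell}(G)$, as Sylow subgroups are functorial with respect to isomorphisms. It remains to check $R$-linearity. For $\ell \in S$ both groups are $\ell$-groups and $\ell^{-1}\notin R$, so by the proof of item (\ref{nontrivial}) of Lemma \ref{Lemma2:2} the $R$-action on either one is the unique extension of the $\bZ$-action obtained by inverting the automorphisms $x \mapsto x^b$ for $\frac{a}{b}\in R$; any group isomorphism automatically intertwines these inverted automorphisms, hence is $R$-linear. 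Combining this with the displayed isomorphism coming from Proposition \ref{Prop2:2} yields \eqref{isoeqR}.

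The only genuinely delicate point is the bookkeeping of the $R$-module structures: one must check that the structure produced on $\tSyl_\ell(Q_1(\bZ,G))$ inside Proposition \ref{Prop2:2} is transported by $\varphi$ to exactly the structure of Lemma \ref{Lemma2:2}(\ref{nontrivial}) on $\tSyl_\ell(G)$. As noted, this is automatic from the uniqueness of that structure on $\ell$-groups with $\ell^{-1}\notin R$, but it is worth making explicit; everything else is the classical computation $I/I^2 \cong G$ plus the already-established Proposition \ref{Prop2:2}.
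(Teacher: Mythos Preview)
Your proposal is correct and follows essentially the same approach as the paper: apply Proposition \ref{Prop2:2} with $n=1$ and then invoke the classical identification $I(\bZ,G)/I(\bZ,G)^2 \cong G$. The paper simply cites this isomorphism (in the direction $Q_1(\bZ,G)\to G$, $\sum_g \beta_g[g]\mapsto \prod_g g^{\beta_g}$, referencing Johnson) and applies it inside Proposition \ref{Prop2:2}, whereas you construct it explicitly in the opposite direction and add the check of $R$-linearity on the Sylow pieces; this extra care is harmless and the two arguments are equivalent.
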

\begin{proof}
	We know that the map $\varphi: Q_1(\bZ, G) \rightarrow G$ defined by
	\begin{equation}\label{defiso}
		\varphi\left(\sum_{g\in G} \beta_g [g] + I(\bZ,G)^2 \right) = \prod_{g\in G} g^{\beta_g}
	\end{equation}
	is an isomorphism (see \cite[Prop 2, p. 150]{Joh97}\footnote{We remark that Johnson's book uses $U$ for denoting $I(\bZ, G)$.}). Therefore, using the fact that $\varphi$ is an isomorphism in  Proposition \ref{Prop2:2}, with $n = 1$, we can conclude the proof.
\end{proof}
\begin{remark}\label{iso}
	Note that we can describe explicitly an isomorphism for equation \eqref{isoeqR}. We will use the  same notation as in Corollary \ref{cor2:1}. Also, denote by $\pi_{\ell}:G \rightarrow \tSyl_{\ell}\left(G\right)$ the natural projection given by the Sylow decomposition. An isomorphism between $Q_1(R,G)$ and $\bigoplus_{\ell \in S} \tSyl_{\ell}\left(G\right)$ is 
	\begin{align*}
		\phi: Q_1(R,G) &\rightarrow \bigoplus_{\ell \in S} \tSyl_{\ell}\left(G\right)\\
		\sum_{g\in G} r_g [g] + I(R,G)^2 &\mapsto \left(\prod_{g\in G} \pi_{\ell}(g)^{r_g}\right)_{\ell\in S} \ ,
	\end{align*}
	where $(-)_{\ell\in S}$ denotes a tuple in $\bigoplus_{\ell \in S} \tSyl_{\ell}\left(G\right)$ indexed by $S$. This isomorphism is obtained by composing the isomorphisms from Lemma \ref{prop1:1}, equation \eqref{isoeq}, item (\ref{nontrivial}) of Lemma \ref{Lemma2:2}, and $\varphi$ (which is defined in equation \eqref{defiso}).
\end{remark}

\section{Refined conjectures of the BSD type}
Let $E/\bQ$ be an elliptic curve of conductor $N\in \bN$. By the work of Tate \cite{Tat74}, we know that if $E$ has split multiplicative reduction at a prime $p$, then there exists a unique $q_p\in \bQ_p^*$ such that
\[
	E(\bC_p) \cong \bC_p^*/q_p^{\bZ},
\]
as rigid analytic spaces and $\tord_p(q_p) > 0$, where $\tord_p$ denotes the $p$-adic valuation, normalized at $p$ \textit{i.e.} $\tord_p(p) = 1$. We will refer to the $q_p$ of the previous equation as the \textbf{$p$-adic period of $E$}.
\vsep
We will denote by $\omega_E$ a N\'eron differential of $E$ and by $\Lambda_E$ the N\'eron lattice of $E$ i.e. $\Lambda_E \defeq \{\int_{[\gamma]} \omega_E; \hsep [\gamma]\in H_1(E(\bC), \bZ)\}$. Following the convention in \cite{MT87}, we define $\Omega_E \defeq \frac{1}{2} \int_{E(\bR)} |\omega|$, which may differ from the smallest positive real period of $\Lambda_E$ by a factor of $\frac{1}{2}$ depending if $\Lambda_E$ is rectangular or non-rectangular.
\vsep
Denote by $\Gamma_0(N)$ the Hecke subgroup of level $N$. Also, let $\cH$ denote the upper-half plane and $\cH^* \defeq \cH \cup \bP^1(\bQ)$ the extended upper-half plane.
\vsep
In 2001, the Modularity Theorem was proven by the work of Breuil, Conrad, Diamond, and Taylor building upon the work of Wiles (see \cite{Wil95}, \cite{TW95}, \cite{BCDT01}). As a consequence of the Modularity Theorem, we know that there exists a complex uniformization $\varphi: \Gamma_0(N)\setminus\cH^* \rightarrow E(\bC)$ such that 
\[
	\varphi^*(\omega_E) = 2\pi i c_E f_E(z) dz,
\]
where $f_E\in S_2(\Gamma_0(N))$ is a newform of weight $2$ and $c_E\in \bQ$ is the Manin constant, which is well-defined up to sign.
\begin{definition}[{\textit{c.f.} \cite[Definition 1.1]{MR23}}]\label{modsymbdef}
	We define the ``$+$'' modular symbol attached to $E$ at $\frac{a}{b}\in \bQ$ as
	\[
	\lambda^+(a,b) \defeq \frac{\pi i}{\Omega_E}\left(\int_{i\infty}^{\frac{a}{b}} f_E(z) dz + \int_{i\infty}^{\frac{-a}{b}} f_E(z) dz\right).
	\]
	For simplicity, we will omit the superscript $+$ \textit{i.e.} $\modsymb{a}{b} \defeq \lambda^+(a,b)$.
\end{definition}
Note that $\lambda(a,b) = \lambda(-a,b)$ for all $\frac{a}{b}\in \bQ$. Also, by the Theorem of Manin-Drinfel'd (\cite{Man72}, \cite{Dri73}) we know that $\modsymb{a}{b}\in \bQ$ for all $\frac{a}{b}\in \bQ$.
\begin{lemma}\label{translation_invariant}
	For any $\frac{a}{b}\in \bQ$ we have that $\lambda(a,b) = \lambda(a + b,b)$.
\end{lemma}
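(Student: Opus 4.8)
The plan is to use the periodicity of $f_E$, namely $f_E(z+1) = f_E(z)$ for all $z\in\cH$, which holds because $f_E$ has a $q$-expansion in $q = e^{2\pi i z}$ (equivalently, because $\begin{pmatrix}1&1\\0&1\end{pmatrix}\in\Gamma_0(N)$ and $f_E$ has weight $2$, so the differential $f_E(z)\,dz$ is $T$-invariant). Fix $\frac{a}{b}\in\bQ$ with $(a,b)=1$; then $(a+b,b)=(a,b)=1$ as well, so $\lambda(a+b,b)$ is indeed defined under the standing convention.

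First I would rewrite the two integrals appearing in $\lambda(a+b,b)$. In $\int_{i\infty}^{(a+b)/b} f_E(z)\,dz$ substitute $w = z-1$: this translation of $\cH$ fixes the cusp $i\infty$ and sends $\frac{a+b}{b}$ to $\frac{a}{b}$, so using $f_E(w+1)=f_E(w)$ we get
\[
\int_{i\infty}^{(a+b)/b} f_E(z)\,dz \;=\; \int_{i\infty}^{a/b} f_E(w+1)\,dw \;=\; \int_{i\infty}^{a/b} f_E(w)\,dw .
\]
Similarly, substituting $w = z+1$ in $\int_{i\infty}^{-(a+b)/b} f_E(z)\,dz$ and using $-\frac{a+b}{b}+1 = -\frac{a}{b}$ yields
\[
\int_{i\infty}^{-(a+b)/b} f_E(z)\,dz \;=\; \int_{i\infty}^{-a/b} f_E(w-1)\,dw \;=\; \int_{i\infty}^{-a/b} f_E(w)\,dw .
\]

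Adding these two identities and multiplying by $\frac{\pi i}{\Omega_E}$ gives $\lambda(a+b,b) = \lambda(a,b)$, which is the claim. I do not anticipate a genuine obstacle: the only point deserving a word of care is that the improper integrals to rational cusps converge and are path-independent, which is standard for weight-$2$ cusp forms and is already implicit in Definition \ref{modsymbdef}; beyond that the argument is just the elementary change of variables above. (Alternatively, one could phrase this via the invariance $\int_{\gamma r}^{\gamma s} f_E(z)\,dz = \int_{r}^{s} f_E(z)\,dz$ for $\gamma\in\Gamma_0(N)$, applied to $\gamma = T$ and $\gamma = T^{-1}$, which amounts to the same computation.)
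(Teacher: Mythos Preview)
Your proof is correct and is exactly the direct calculation the paper alludes to: the paper's own proof simply says it follows from a direct computation using the invariance of $f_E(z)$ under $z\mapsto z+1$, and you have carried out precisely that computation.
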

\begin{proof}
	This follows from a direct calculation, using the fact that $f_E(z)$ is invariant under the map $z \mapsto z + 1$. 
\end{proof}
\vsep
Given $M\in \bN$ let $G_M \defeq \GU{M}/\left<-1\right>$.
\begin{definition}[{\textit{c.f.} \cite[Section 1.2]{MT87}}]\label{Mazur-Tate}
Fix an elliptic curve $E/\bQ$ and a natural number $M\in \bN$. Let $R\subseteq \bQ$ be a subring containing $\left\{\modsymb{a}{M}\right\}_{a\in G_M}$. We define the \textbf{Mazur-Tate element at layer $M$} as
\[
	\theta_{E, M} \defeq \sum_{a\in G_M} \modsymb{a}{M} [a]\in R\left[G_M\right].
\]
If the elliptic curve is implicitly clear, we will omit the subscript $E$. \textit{i.e.} $\theta_{M} \defeq \theta_{E, M}$.
\end{definition}\label{MTEl}
Note that $\modsymb{a}{M}$ is independent on the choice of coset in $G_M = \GU{M}/\left<-1\right>$. This follows from Lemma \ref{translation_invariant} and using the fact that $\lambda(a,M) = \lambda(-a,M)$. Therefore, the Mazur-Tate element is well-defined.
\vsep
We define the vanishing order of $\theta_M\in R[G_M]$ as
\[
	\tord(\theta_M) \defeq \begin{cases}
		0 & \text{ if } \theta_M \notin I(R, G_M),\\
		r & \text{ if }\theta_M\in I(R, G_M)^r-I(R, G_M)^{r+1},\\
		\infty & \text{ if } \theta_M\in I(R, G_M)^r; \forall r\in \bN.
	\end{cases}
\]
Fix $S_m\subseteq \cP$ a subset of primes numbers such that if $p\in S_m$ then $E$ has split multiplicative reduction at $p$. Also, for each $p\in S_m$ fix a positive number $e_p > 0$ and set
\[
	M \defeq \prod_{p\in S_m} p^{e_p}.
\]
For each $p\in S_m$ let $\tilde{q}_p \defeq q_p/p^{\tord_p(q_p)}\in \bZ_p^*$. Note that we have natural mappings
\[
	\bZ_p^* \twoheadrightarrow \left(\bZ/p^{e_p}\bZ\right)^* \hookrightarrow \left(\bZ/M\bZ\right)^* \twoheadrightarrow G_M .
\]
This allows us to see $\tilde{q}_p$ as an element of $G_M$.
\vsep
Now, set $r \defeq \# S_m$. We will numerically study the following conjectures.
\begin{conjecture}[{\textit{c.f.} \cite[Conjecture 4]{MT87}}]\label{conj4}
	Let $R\subseteq \bQ$ be the smallest subring containing $\left\{\modsymb{a}{M}\right\}_{a\in G_M}$. We have that $\tord(\theta_M)\geq r + \trk_{\bZ}(E(\bQ))$.
\end{conjecture}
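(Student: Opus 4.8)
We outline an approach to Conjecture \ref{conj4}, flagging where it becomes genuinely hard.

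The plan is to split the claimed bound $\tord(\theta_M)\geq r+\trk_{\bZ}(E(\bQ))$ into a ``trivial zero'' contribution $\tord(\theta_M)\geq r$, coming from the $r=\#S_m$ primes of split multiplicative reduction that enter the layer $M$, and a ``rank'' contribution coming from $E(\bQ)$, and to analyse both through the graded pieces $Q_n(R,G_M)=I(R,G_M)^n/I(R,G_M)^{n+1}$ with the machinery of Section 1. By Proposition \ref{Prop2:2}, for each $n$ the question whether $\theta_M\in I(R,G_M)^{n+1}$ (given $\theta_M\in I(R,G_M)^n$) is decided by the images of $\theta_M$ in the finitely many non-trivial $\ell$-Sylow subgroups of $Q_n(\bZ,G_M)$ with $\ell^{-1}\notin R$; for $n=1$ these are just the Sylow subgroups of $G_M$, and Corollary \ref{cor2:1} together with Remark \ref{iso} shows that $\tord(\theta_M)\geq 2$ is equivalent to the vanishing of the explicit products $\prod_{g\in G_M}\pi_\ell(g)^{\lambda(g,M)}$, i.e.\ to the low-order part of the conjectures stated in the introduction. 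So everything reduces to controlling these classes.

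First I would establish the trivial-zero bound $\tord(\theta_M)\geq r$. The mechanism is that for each $p\in S_m$ the $U_p$-eigenvalue of $f_E$ equals $1$, because $E$ has split multiplicative reduction at $p$, and this forces an exceptional zero. The tool is the distribution (corestriction) relation for Mazur-Tate elements: for a split multiplicative prime $p$ with $p\nmid M'$, the push-forward $R[G_{M'p}]\to R[G_{M'}]$ carries $\theta_{E,M'p}$ to a multiple of $\theta_{E,M'}$ by an Euler factor at $p$ which, because $a_p=1$, lies in the augmentation ideal $I(R,G_{M'})$ (morally a factor $1-\sigma_p$, where $\sigma_p\in G_{M'}$ is the image of $p$). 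Removing the primes of $S_m$ from the layer one at a time and bookkeeping how these factors combine after push-forward --- where one uses the vanishing of the augmentation of $\theta_{E,p^{e_p}}$, which for $e_p=1$ is the Bergunde-Gehrmann theorem recalled in Remark \ref{introremark} --- should give $\theta_M\in I(R,G_M)^r$. This step is known in the simplest cases and is technical in general; it leans on the cited literature rather than on the elementary results of Section 1.

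Second, for the rank contribution one wants $\theta_M\in I(R,G_M)^{\,r+\trk_{\bZ}(E(\bQ))}$. The idea would be to attach to each of a maximal set of independent points of $E(\bQ)$ one further degree of vanishing, by means of a ``refined regulator'' pairing on the leading graded piece of $\theta_M$ --- identified via Proposition \ref{Prop2:2} with a sum of Sylow subgroups of $Q_r(\bZ,G_M)$ --- modelled on the way the N\'eron-Tate regulator enters the Birch-Swinnerton-Dyer formula and on the exceptional-zero formula of Mazur-Tate-Teitelbaum; concretely, one would show that the existence of a non-torsion rational point annihilates the would-be leading coefficient in $Q_r(R,G_M)$, and then induct on $\trk_{\bZ}(E(\bQ))$.

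The hard part is this second step. It is the group-ring refinement of the assertion that the ($p$-adic) $L$-function of $E$ vanishes at the central point to order at least $\trk_{\bZ}(E(\bQ))$, on top of the $r$ exceptional zeros, and in the generality of Conjecture \ref{conj4} this is not known; it is of the same depth as the Birch-Swinnerton-Dyer conjecture and its $p$-adic analogue. An unconditional proof is therefore out of reach with current techniques: what is realistically attainable is (i) the bound $\tord(\theta_M)\geq r$ as above, and (ii) the full bound in special cases where Euler system or Gross-Zagier-Kolyvagin methods control $E(\bQ)$ and where the relevant $p$-adic $L$-value vanishing is established, for instance in analytic rank at most one. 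This is precisely why Conjecture \ref{conj4} is studied numerically in the present article rather than proved.
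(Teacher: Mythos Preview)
The statement you are addressing is a \emph{conjecture}; the paper does not prove it, so there is no proof of the paper's to compare your outline against. More importantly, the paper's numerical work indicates that Conjecture~\ref{conj4} is apparently \emph{false} as stated: Section~3 reports $367$ pairs $(E,p)$ with $\trk_{\bZ}(E(\bQ))>0$ for which Conjecture~\ref{conj4mul} fails, and since Conjecture~\ref{conj4} (with $S_m=\{p\}$, $M=p$) implies Conjecture~\ref{conj4mul}, these are counterexamples to Conjecture~\ref{conj4} as well. The paper's response is not to attempt a proof but to propose the amended Conjecture~\ref{conj4mod}, which adds the hypothesis $(\#E(\bQ)_{\tTor})^{-1}\in R$.

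Your outline is reasonable in spirit --- splitting the bound into a trivial-zero part $\tord(\theta_M)\ge r$ and a rank part is the natural strategy, and you rightly flag the rank part as being of BSD depth --- but no strategy can succeed for a false statement. Concretely, in the counterexamples one has $r=1$ and $\trk_{\bZ}(E(\bQ))\ge 1$; the bound $\tord(\theta_p)\ge 1$ does hold (this is the Bergunde--Gehrmann result you cite), but the image of $\theta_p$ in $Q_1(R,G_p)$ is nonzero, so $\tord(\theta_p)=1<2\le r+\trk_{\bZ}(E(\bQ))$. Thus it is precisely your ``rank'' step that breaks, and not merely for lack of technique: the obstruction lives in the $\ell$-Sylow part of $G_p$ for primes $\ell\mid\#E(\bQ)_{\tTor}$, which is exactly what the enlarged ring $R$ in Conjecture~\ref{conj4mod} is designed to kill. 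Your closing sentence (``studied numerically \dots\ rather than proved'') understates the situation; the correct takeaway is that the numerics disprove the conjecture in its original form.
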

\begin{conjecture}[{\textit{c.f.} \cite[Conjecture 5]{MT87}}]\label{con5}
	 Assume that $\tau \defeq \#E(\bQ)$ is finite. Let $R\subseteq \bQ$ be the smallest subring containing $\tau^{-1}$ and $\left\{\modsymb{a}{M}\right\}_{a\in G_M}$. Then, we have that $\theta_M\in I(R,G_M)^r$ and if we denote by $\tilde{\theta}_M$ the image of $\theta_M$ in $Q_r(R,G_M)$ then
		\[
			\tilde{\theta}_M = \left(\prod_{p\in S_m} [\tilde{q}_p] - [1]\right) \dfrac{\#\Sha_E \prod_{p\in \cP-S_m} C_p}{\tau^2} \text{ in } Q_r(R, G_M),
		\]
		where $\Sha_E$ denotes the Tate-Shafarevich group, $C_p \defeq [E(\bQ_p):E^0(\bQ_p)]$, and $E^0$ denotes the identity component.
\end{conjecture}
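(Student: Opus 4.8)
\vsep
We outline a possible line of attack; a complete proof in the stated generality appears to be out of reach, but the reductions are worth recording. The assertion has two halves: (i) the vanishing $\theta_M\in I(R,G_M)^r$, which is the content of Conjecture~\ref{conj4} when $\trk_{\bZ}(E(\bQ))=0$, and (ii) the explicit value of $\tilde\theta_M$ in $Q_r(R,G_M)$. The plan for (i) is to induct on $r=\#S_m$. The base case $r=1$, with $S_m=\{p\}$ and $M=p^{e}$, follows by applying $\mathrm{aug}$ and reduces to $\sum_{a\in G_M}\modsymb{a}{M}=0$: for $e=1$ this comes from the $U_p$-relation $\sum_{j=0}^{p-1}\modsymb{j}{p}=a_p\,\modsymb{0}{1}$ (recall $\modsymb{0}{p}=\modsymb{0}{1}$) together with $a_p=1$ for split multiplicative reduction and $\modsymb{j}{p}=\modsymb{p-j}{p}$ — that is, it is the exceptional-zero identity, and it is the theorem of Bergunde and Gehrmann recalled in Remark~\ref{introremark}; the case $e>1$ follows by feeding this into the norm relations between consecutive layers. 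For $r>1$ one propagates the $r=1$ vanishing through the norm relations among the Mazur-Tate elements attached to the primes of $S_m$ and concludes with $I(R,G_M)^{a}I(R,G_M)^{b}\subseteq I(R,G_M)^{a+b}$; by Proposition~\ref{Prop2:2} and Corollary~\ref{cor2:1} this needs to be checked only after projecting onto each non-trivial $\ell$-Sylow summand, $\ell\in S$.

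\vsep
For (ii), the idea is to transport the identity out of the abstract quotient $Q_r(R,G_M)$ and into the concrete group $\bigoplus_{\ell\in S}\tSyl_\ell(G_M)$ by the explicit isomorphism $\phi$ of Remark~\ref{iso}. There the factor $\prod_{p\in S_m}\big([\tilde q_p]-[1]\big)$ becomes the tuple of $\pi_\ell$-images of $\prod_{p\in S_m}\tilde q_p$, while the rational scalar $\#\Sha_E\prod_{q\in\cP-S_m}C_q/\tau^2$ is, up to powers of $2$ coming from the normalization of $\Omega_E$ and the definition of $G_M$, equal to $\modsymb{0}{1}/\prod_{p\in S_m}\tord_p(q_p)$ — this uses the strong Birch-Swinnerton-Dyer formula in analytic rank $0$, the identity $\modsymb{0}{1}=L(E,1)/\Omega_E$ up to sign, and the equality $C_p=\tord_p(q_p)$ at a prime of split multiplicative reduction. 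So (ii) amounts to a refined comparison between the leading term of the $\theta$-element and the $p$-adic $L$-functions of $E$ at the primes of $S_m$, a Mazur-Tate-Teitelbaum-type statement in which the group-ring factors $[\tilde q_p]-[1]$ are the finite-level counterpart of the $\cL$-invariant $\tord_p(q_p)^{-1}\log_p(\tilde q_p)$ attached to the exceptional zero at $p$.

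\vsep
The main obstacle is part (ii): since $\#\Sha_E$ occurs, the statement is essentially as deep as the Birch-Swinnerton-Dyer conjecture itself — it presupposes the finiteness of $\Sha_E$ and determines its exact order — so one should expect only conditional or partial results. I would aim to prove it unconditionally in three kinds of situations: (a) $E$ of prime conductor, following de Shalit \cite{dSh95}, which yields the congruence at the primes $\ell$ in a subset $S'\subseteq S$; (b) analytic rank $0$ with the order of $\Sha_E$ controlled at the relevant primes through Kolyvagin's theorem and the Iwasawa main conjecture (the Euler system of Kato, the work of Skinner-Urban); and (c) after enlarging $R$ until every small prime $\ell$ becomes invertible, in which case $Q_r(R,G_M)=0$ and there is nothing to prove. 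Part (i), though more tractable and already settled for layer $p$, is not purely formal either: it rests on the full package of Hecke and norm relations for modular symbols together with the exceptional-zero vanishing at each $p\in S_m$.
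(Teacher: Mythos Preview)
The statement you are attempting to prove is labelled \emph{Conjecture} in the paper, and the paper offers no proof of it whatsoever: it is recorded as a restatement of \cite[Conjecture~5]{MT87} and is then studied numerically. So there is no ``paper's own proof'' to compare against. Your proposal is likewise not a proof, as you yourself concede in the first sentence (``a complete proof in the stated generality appears to be out of reach''). What you have written is a heuristic road map --- reasonable as commentary, but it does not establish the conjecture.

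Two specific points are worth flagging. First, your part~(ii) cannot be made unconditional: the right-hand side contains $\#\Sha_E$, and finiteness of $\Sha_E$ is not known in general, so any ``proof'' of the exact formula would subsume a case of the strong BSD conjecture. The conditional routes you list (Kolyvagin, Kato, Skinner--Urban) give information at individual primes $\ell$ under various hypotheses, but do not deliver the full identity in $Q_r(R,G_M)$. Second, there is a genuine technical slip: you invoke the explicit isomorphism $\phi$ of Remark~\ref{iso} to transport the identity from $Q_r(R,G_M)$ into $\bigoplus_{\ell\in S}\tSyl_\ell(G_M)$, but that isomorphism is stated and proved only for $Q_1$, via $I/I^2\cong G$. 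For $r>1$ the quotient $Q_r(R,G_M)=I^r/I^{r+1}$ is not isomorphic to $G_M$ or to a sum of its Sylow subgroups; it is (roughly) a symmetric power, and the image of $\prod_{p\in S_m}([\tilde q_p]-[1])$ is not simply the tuple of $\pi_\ell$-images of $\prod_{p\in S_m}\tilde q_p$. Your reduction of (ii) to a multiplicative identity in $G_M$ is therefore only valid when $r=1$, which is precisely the case handled in Lemma~\ref{equivalence_conjectures} of the paper.
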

\begin{conjecture}[{\textit{c.f.} \cite[Conjecture 6]{MT87}}]\label{con2}
	Let $R\subseteq \bQ$ be the smallest subring containing $\frac{\modsymb{0}{1}}{2\prod_{p\in S_m} C_p}$ and $\left\{\modsymb{a}{M}\right\}_{a\in G_M}$. Then we have that $\theta_M\in I(R,G_M)^r$ and if we denote by $\tilde{\theta}_M$ the image of $\theta_M$ in $Q_r(R,G_M)$ then
		\begin{equation}\label{conjecture3:1}
			\tilde{\theta}_M = \left(\prod_{p\in S_m} [\tilde{q}_p] - [1]\right) \frac{\modsymb{0}{1}}{2\prod_{p\in S_m} C_p} \text{ in } Q_r(R, G_M).
		\end{equation}
		If $\trk_{\bZ}(E(\bQ)) > 0$, then both sides of equation \eqref{conjecture3:1} are $0$.
\end{conjecture}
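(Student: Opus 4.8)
\emph{Proof strategy.} Strictly speaking this is a conjecture --- indeed, as the computations below show, false as literally stated --- so what follows is a plan for establishing (the corrected form of) it, together with an account of what is provable and where the real difficulty lies. The plan is to follow the template common to all the refined conjectures of BSD type: separate the assertion on the order of vanishing of $\theta_M$ from the leading-term identity, and use the $\ell$-Sylow machinery of Section~1 to reduce the latter to a finite list of congruences. First I would apply Corollary~\ref{cor2:1} and Proposition~\ref{Prop2:2} to identify $Q_r(R,G_M)$ with $\bigoplus_{\ell\in S}\tSyl_{\ell}\bigl(Q_r(\bZ,G_M)\bigr)$, where $S$ is the set of primes dividing $\#G_M$ that are not invertible in $R$; then \eqref{conjecture3:1} becomes one congruence per such $\ell$. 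Next I would reduce to the case $r=1$: using the factorization of $G_M$ along the primes dividing $M$ together with the distribution relations for the modular symbol $\lambda(\cdot,\cdot)$, the element $\theta_M$ should decompose, modulo $I(R,G_M)^{r+1}$, as a product of the layer-$p^{e_p}$ elements $\theta_{p^{e_p}}$; granting that each of these lies in its augmentation ideal, the product lands in $I(R,G_M)^r$ and its image in $Q_r$ is the product of the images of the $\theta_{p^{e_p}}$ in the respective $Q_1$. A further compatibility in the exponent $e_p$ then reduces the entire statement to the case $S_m=\{p\}$, $e_p=1$ --- which is exactly Conjecture~\ref{conjintro}.

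For this reduced statement, the piece that is genuinely within reach is the vanishing $\theta_p\in I(R,G_p)$: this is equivalent to $\mathrm{aug}(\theta_p)=\sum_{a\in G_p}\lambda(a,p)=0$, which is the theorem of Bergunde and Gehrmann recalled in Remark~\ref{introremark}. Granting it, the image of $\theta_p$ in $Q_1(R,G_p)$ is, by the explicit isomorphism of Remark~\ref{iso}, the tuple $\bigl(\prod_{a\in G_p}\pi_{\ell}(a)^{\lambda(a,p)}\bigr)_{\ell\in S}$, so the leading-term identity becomes the family of congruences $\prod_{a\in G_p}\pi_{\ell}(a)^{\lambda(a,p)}\equiv\pi_{\ell}(\tilde{q}_p)^{\modsymb{0}{1}/(2C_p)}$ in $\tSyl_{\ell}(G_p)$ for $\ell\in S$. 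The only unconditional input I know for these is de Shalit's theorem \cite{dSh95}, which proves exactly such congruences when the conductor $N$ is prime, for the subset $S'\subseteq S$ described in Remark~\ref{rmkdShalit}. For the positive-rank clause: $\modsymb{0}{1}$ vanishes precisely when $L(E,1)=0$, which holds whenever $\trk_{\bZ}(E(\bQ))>0$ (a consequence of Modularity and Kolyvagin's theorem), so the right-hand side of \eqref{conjecture3:1} is $0$; the vanishing of the left-hand side in $Q_r(R,G_M)$ is then precisely the positive-rank case of Conjecture~\ref{conj4}, that $\tord(\theta_M)\ge r+\trk_{\bZ}(E(\bQ))$.

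The main obstacle is the leading-term congruence for general conductor. A proof would require a reciprocity law linking the reductions modulo $\ell$ of the modular symbols $\lambda(a,p)$ to the local arithmetic of $E$ at $p$ --- the Tate parameter $\tilde{q}_p$, the Tamagawa numbers $C_p$, and (through the companion Conjecture~\ref{con5}) the order of $\Sha_E$ --- and no such input is available beyond de Shalit's prime-conductor setting; in this sense the statement packages a refinement of BSD and is unconditionally out of reach. Moreover --- and this is the whole point of the present article --- the conjecture is \emph{false} as literally stated: the computations of Section~3 exhibit pairs $(E,p)$ for which some $\ell\in S$ divides $\#E(\bQ)_{\tTor}$ and \eqref{conjecture3:1} fails, the remedy being to enlarge $R$ so that $\#E(\bQ)_{\tTor}^{-1}\in R$. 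Thus the correct target of any proof attempt is the modified statement (Conjecture~\ref{conj6mod}), to which the reductions above still apply and which remains conjectural outside de Shalit's case.
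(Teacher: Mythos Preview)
The statement is a \emph{conjecture}, and the paper offers no proof of it; there is therefore no ``paper's own proof'' to compare against. What the paper does is (i) state the conjecture, (ii) show in Lemma~\ref{equivalence_conjectures} that in the special case $S_m=\{p\}$, $M=p$ it is equivalent to the multiplicative formulation of Conjecture~\ref{conjintro}, (iii) exhibit numerical counterexamples to that special case, and (iv) propose the corrected Conjecture~\ref{conj6mod}. You have correctly identified all of this, and your account of what is unconditionally known (Bergunde--Gehrmann for the augmentation vanishing, de Shalit for the prime-conductor congruences, Kolyvagin for the vanishing of $\lambda(0,1)$ in positive rank) is accurate and matches the paper's Remarks~\ref{introremark} and~\ref{rmkdShalit}.

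Where your proposal goes beyond the paper is in the sketched reduction from general $M$ to the case $M=p$ via a ``decomposition of $\theta_M$ modulo $I(R,G_M)^{r+1}$ as a product of the $\theta_{p^{e_p}}$''. The paper makes no such claim and treats only the case $M=p$ directly; the decomposition you suggest is plausible in spirit (distribution relations do relate Mazur--Tate elements at different layers) but is not established anywhere in the paper, and you would need to be careful that the ring $R$ built from the full set $\{\lambda(a,M)\}_{a\in G_M}$ matches the rings arising at the individual layers. Since you present this only as a plan rather than a proof, it is an honest description of a possible route, but be aware it is your own speculation rather than something the paper supplies or supports.
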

\begin{remark}
	The term $\prod_{p\in S_m} ([\tilde{q}_p] - [1])$, in Conjecture \ref{con5} and Conjecture \ref{con2}, is the so-called ``corrected discriminant'' in \cite{MT87} after projecting to $Q_r(R, G_M)$ (see \cite[eq (2.6.5) p. 737]{MT87} for the general definition, and \cite[Conjecture 5]{MT87} for the particular case of Conjecture \ref{con5} and Conjecture \ref{con2}). 
\end{remark}
\begin{remark}\label{rmkR}
It should be noted that in the conjectures stated in \cite{MT87} there are few restrictions for the ring $R$. However, we will show that the ring $R$ has a significant impact on the veracity of the refined conjectures of the BSD type, see Remark \ref{importantRemark}
\end{remark}
As mentioned in the introduction, in the particular case when $S_m = \{p\}$ and $M = p$, we can reformulate Conjecture \ref{con2} to get a ``multiplicative'' formulation, which is easier to compute numerically. 
\vsep
\begin{lemma}\label{equivalence_conjectures}
	Conjecture \ref{conjintro} is equivalent to Conjecture \ref{con2} when $M = p$ and $E$ has split multiplicative reduction at $p$.
\end{lemma}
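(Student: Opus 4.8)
The plan is to prove the equivalence by unwinding both statements, in the special case $S_m=\{p\}$, $M=p$ (so $r=1$), through the explicit isomorphism $\phi$ of Remark~\ref{iso}. The first thing I would record is that the data attached to the two conjectures coincide. For an elliptic curve with split multiplicative reduction at $p$ one has $C_p=[E(\bQ_p):E^0(\bQ_p)]=\tord_p(q_p)$; this is classical and follows from Tate's analytic parametrization \cite{Tat74}. Hence $\frac{\modsymb{0}{1}}{2C_p}=\frac{\modsymb{0}{1}}{2\tord_p(q_p)}$, so the ring $R$ of Conjecture~\ref{con2} (the smallest subring of $\bQ$ containing $\frac{\modsymb{0}{1}}{2C_p}$ and $\{\modsymb{a}{p}\}_{a\in G_p}$) is literally the ring $R$ of Conjecture~\ref{conjintro}, and the prime set $S$ (defined by $\ell\in S\iff \ell\mid\#G_p$ and $\ell^{-1}\notin R$) is the same in both. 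Likewise $\tilde{q}_p\in\bZ_p^{*}$ determines the same class in $G_p$ via $\bZ_p^{*}\twoheadrightarrow(\bZ/p\bZ)^{*}\twoheadrightarrow G_p$ in both conjectures.

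Next I would treat the vanishing assertion. Since $\mathrm{aug}(\theta_p)=\sum_{a\in G_p}\modsymb{a}{p}$, we have $\theta_p\in I(R,G_p)$ if and only if $\sum_{a\in G_p}\modsymb{a}{p}=0$, so the clause ``$\theta_p\in I(R,G_p)$'' of Conjecture~\ref{con2} is exactly the first assertion of Conjecture~\ref{conjintro}. Granting this, both sides of \eqref{conjecture3:1} are honest classes in $Q_1(R,G_p)=I(R,G_p)/I(R,G_p)^{2}$: the left side is $\tilde\theta_p=\theta_p+I(R,G_p)^{2}$, and since $[\tilde{q}_p]-[1]\in I(R,G_p)$, the right side is the class of $\frac{\modsymb{0}{1}}{2C_p}\bigl([\tilde{q}_p]-[1]\bigr)$.

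Then I would transport everything along the $R$-module isomorphism $\phi\colon Q_1(R,G_p)\to\bigoplus_{\ell\in S}\tSyl_\ell(G_p)$ of Remark~\ref{iso}. Writing $\theta_p=\sum_{a\in G_p}\modsymb{a}{p}[a]$, the definition of $\phi$ gives $\phi(\tilde\theta_p)=\bigl(\prod_{a\in G_p}\pi_\ell(a)^{\modsymb{a}{p}}\bigr)_{\ell\in S}$. On the other hand $\phi$ sends the class of $[\tilde{q}_p]-[1]$ to $(\pi_\ell(\tilde{q}_p))_{\ell\in S}$ (using $\pi_\ell(1)=e$), and since $\phi$ is $R$-linear and the $R$-module structure on each $\tSyl_\ell(G_p)$ is the one from item~(\ref{nontrivial}) of Lemma~\ref{Lemma2:2}, the class of $\frac{\modsymb{0}{1}}{2C_p}\bigl([\tilde{q}_p]-[1]\bigr)$ maps to $\bigl(\pi_\ell(\tilde{q}_p)^{\modsymb{0}{1}/(2C_p)}\bigr)_{\ell\in S}$, the rational exponent being legitimate since $\ell^{-1}\notin R$ for $\ell\in S$. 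Because $\phi$ is bijective, \eqref{conjecture3:1} holds if and only if these two tuples coincide, i.e. if and only if \eqref{eqconjintro} holds in $\tSyl_\ell(G_p)$ for every $\ell\in S$ (using $C_p=\tord_p(q_p)$). Finally $\phi$ sends $0$ to the identity tuple, so ``both sides of \eqref{conjecture3:1} are $0$'' is equivalent to ``both sides of \eqref{eqconjintro} are $1$'', matching the clauses under $\trk_{\bZ}(E(\bQ))>0$; combining, Conjecture~\ref{con2} with $M=p$ is equivalent to Conjecture~\ref{conjintro}.

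The argument is essentially a careful unwinding of definitions; the single external input is $C_p=\tord_p(q_p)$, and the step that needs the most care is the $R$-module bookkeeping in the last paragraph — checking that $\phi$ is $R$-linear and that scalar multiplication by $\frac{\modsymb{0}{1}}{2C_p}\in R$ is transported to exponentiation by that rational number, which amounts to tracing the structure of item~(\ref{nontrivial}) of Lemma~\ref{Lemma2:2} through the composition of isomorphisms defining $\phi$.
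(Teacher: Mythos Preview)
Your proof is correct and follows essentially the same route as the paper's: identify $C_p=\tord_p(q_p)$, observe that $\theta_p\in I(R,G_p)$ is the vanishing condition $\sum_{a}\modsymb{a}{p}=0$, and then apply the isomorphism $\phi$ of Remark~\ref{iso} to translate \eqref{conjecture3:1} into the coordinate-wise equalities \eqref{eqconjintro}. Your write-up is in fact more thorough than the paper's, since you explicitly check that the rings $R$ agree, that $\phi$ transports the $R$-scalar to a rational exponent, and that the ``both sides $0$'' clause matches the ``both sides $1$'' clause.
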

\begin{proof}
Firstly, note that the condition that $\theta_p\in I(R,G_p)$ is, by definition, the equality $\sum_{a\in G_p} \lambda(a,p) = 0$.
\vsep
Now, when $E$ has split multiplicative reduction at $p$ and $M = p$ we have that Conjecture \ref{con2} reads
\begin{equation}\label{preveq}
	\sum_{a\in G_p} \lambda(a,p) [a] = \tilde{\theta}_p = \left([\tilde{q}_p] - [1]\right) \frac{\modsymb{0}{1}}{2C_p} \text{ in } Q_1(R, G_p).
\end{equation}
Now, denote by $S\subseteq \cP$ the set of prime numbers defined as follows: $\ell\in S$ if and only if $\ell \mid \#G_p$ and $\ell^{-1}\notin R$. Also, denote by $\pi_{\ell}:G_p \rightarrow \tSyl_{\ell}(G_p)$ the natural projection given by the Sylow decomposition. We can use the isomorphism $\phi$ defined in Remark \ref{iso}, on equation \eqref{preveq}, to get
\begin{equation}\label{eqintroeq}
	\left(\prod_{a\in G_p} \pi_{\ell}(a)^{\lambda(a,p)}\right)_{\ell\in S} = \left(\pi_{\ell}\left(\tilde{q}_p\right)^{\frac{\lambda(0,1)}{2C_p}}\right)_{\ell\in S}
\end{equation}
where $(-)_{\ell\in S}$ denotes a tuple in $\bigoplus_{\ell\in S} \tSyl_{\ell}(G_p)$ indexed by $S$.
\vsep
Finally, using the fact that $C_p = \tord_p(q_p)$ (see \cite[Chap. V, \S 5, Pag. 438]{Sil94}, \cite[C.15, Thm. 15.1]{Sil09}, and \cite[Corollary 15.2.1]{Sil09}) in the previous equation we conclude the proof (note that Conjecture \ref{conjintro} is equation \eqref{eqintroeq} as a coordinate-wise equality). 
\end{proof}
\begin{remark}\label{rmkdShalit}
	In \cite{dSh95}, de Shalit proved that if $E$ has prime conductor, then Conjecture \ref{conjintro} holds when we replace $S$ by a subset $S'\subseteq S$ (see \cite[Theorem 0.3]{dSh95}). More specifically, define $S'\subseteq \cP$ as follows: $\ell \in S'$ if and only if $\ell > 3, \ell\mid \#G_p$, $\ell^{-1} \notin R$, and $\ell$ is coprime to the modular degree of $E$. If $E$ has conductor $p$, then Conjecture \ref{conjintro} holds after replacing $S$ by $S'$.
\end{remark}
We can carry out the same process with Conjecture \ref{con5}, and we deduce the following statement equivalent to Conjecture \ref{con5} when $S_m = \{p\}$ and $M = p$.
\begin{conjecture}\label{conj5mul}
	Assume $\tau \defeq \#E(\bQ)$ is finite and that $S_m = \{p\}$. Let $R\subseteq \bQ$ be the smallest subring containing $\tau^{-1}$ and $\{\modsymb{a}{p}\}_{a\in G_p}$. Define $S\subseteq \cP$ as follows: $\ell\in S$ if and only if $\ell \mid \#G_p$ and $\ell^{-1}\notin R$. Then $\theta_p\in I(R,G_p)$, and for all $\ell\in S$ we have that
	\[
		\prod_{a\in G_p} \pi_{\ell}(a)^{\modsymb{a}{p}} \equiv \pi_{\ell}(\tilde{q}_p)^{\dfrac{\#\Sha_E \prod_{p'\in \cP-\{p\}} C_{p'}}{\tau^2}},
	\]
	where $\pi_{\ell}:G_p \rightarrow \tSyl_{\ell}(G_p)$ denotes the natural projection given by the Sylow decomposition.
\end{conjecture}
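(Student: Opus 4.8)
The plan is to split the statement into its two independent assertions — the membership $\theta_p\in I(R,G_p)$ and, for each $\ell\in S$, the product congruence — to dispatch the first directly, and to reduce the second to Conjecture \ref{con5}.

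For the membership, recall that $\theta_p\in I(R,G_p)$ means exactly $\sum_{a\in G_p}\modsymb{a}{p}=0$. Since $E$ has split multiplicative reduction at $p$, the attached newform is a $U_p$-eigenform with eigenvalue $a_p=1$, and the distribution (Hecke) relation for the $+$ modular symbol at $p$ gives $\sum_{a=0}^{p-1}\modsymb{a}{p}=a_p\modsymb{0}{1}=\modsymb{0}{1}$; subtracting the $a=0$ term $\modsymb{0}{p}=\modsymb{0}{1}$ yields $\sum_{a=1}^{p-1}\modsymb{a}{p}=0$, and since $\modsymb{a}{p}=\modsymb{-a}{p}$ this sum equals $2\sum_{a\in G_p}\modsymb{a}{p}$, whence $\sum_{a\in G_p}\modsymb{a}{p}=0$. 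Equivalently, this is the theorem of Bergunde--Gehrmann quoted in Remark \ref{introremark}, which one may simply cite as \cite{BL17}.

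For the congruence I would pass to the ``additive'' side. First note that the exponent $c\defeq\frac{\#\Sha_E\prod_{p'\in\cP-\{p\}}C_{p'}}{\tau^2}$ lies in $R$, since $\tau^{-1}\in R$, so $\pi_\ell(\tilde q_p)^{c}$ is well-defined for $\ell\in S$ by the discussion following Conjecture \ref{conjintro}. Applying the isomorphism $\phi$ of Remark \ref{iso} to the image $\tilde\theta_p$ of $\theta_p$ in $Q_1(R,G_p)$, the family of congruences ``$\prod_{a\in G_p}\pi_\ell(a)^{\modsymb{a}{p}}\equiv\pi_\ell(\tilde q_p)^{c}$ for all $\ell\in S$'' is equivalent to the single equality $\tilde\theta_p=([\tilde q_p]-[1])\,c$ in $Q_1(R,G_p)$. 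But this is precisely the assertion of Conjecture \ref{con5} in the special case $S_m=\{p\}$, $M=p$, $r=1$, exactly as Lemma \ref{equivalence_conjectures} reduces Conjecture \ref{conjintro} to Conjecture \ref{con2}. So the whole statement follows once Conjecture \ref{con5} is proved in that case.

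The hard part is therefore Conjecture \ref{con5} itself, and here I do not know a complete argument: it is a refined Birch--Swinnerton--Dyer statement which, over $\bQ$, is open in general. The natural line of attack is: (i) relate the class $\tilde\theta_p\in Q_1(R,G_p)$ — which by Corollary \ref{cor2:1} lives in $\bigoplus_{\ell\in S}\tSyl_{\ell}(G_p)$ — to arithmetic invariants of $E$ through the theory of Mazur--Tate elements; (ii) exploit the fact that split multiplicative reduction produces an exceptional (trivial) zero, so that by the exceptional-zero formula of Mazur--Tate--Teitelbaum \cite{MTT96} and Greenberg--Stevens the leading term carries the $\mathcal{L}$-invariant $\tord_p(q_p)^{-1}\log_p(\tilde q_p)$, which is what forces the factor $[\tilde q_p]-[1]$; and (iii) identify the remaining factor with $\#\Sha_E\prod_{p'\ne p}C_{p'}/\tau^2$. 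Step (iii) is the main obstacle: this quotient is exactly the rank-$0$ Birch--Swinnerton--Dyer constant, so its appearance on the nose presupposes the full BSD formula (still open in general), and even granting it one must control the Manin constant and the factor of $2$ between $\Omega_E$ and the minimal real period in the non-rectangular case. The only unconditional input I would expect to recover is the prime-conductor case, following de Shalit \cite{dSh95} via a Gross--Zagier/Kolyvagin analysis of the leading term, and even then only for the subset $S'\subseteq S$ of Remark \ref{rmkdShalit}. Finally, I would observe that Conjecture \ref{conj5mul} already forces $\tau^{-1}\in R$, and that finiteness of $\tau$ forces $\trk_{\bZ}E(\bQ)=0$, hence $\tau=\#E(\bQ)_{\tTor}$; so, unlike Conjecture \ref{conjintro}, this statement automatically inverts the torsion primes and thereby avoids the failure mode the paper identifies numerically — which suggests it does hold as stated, even though a full proof remains out of reach with current techniques.
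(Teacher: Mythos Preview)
Your reduction is correct and matches the paper. The key point is that Conjecture \ref{conj5mul} is not a theorem: the paper does not prove it, but only derives it as the reformulation of Conjecture \ref{con5} in the case $S_m=\{p\}$, $M=p$, ``by carrying out the same process'' as in Lemma \ref{equivalence_conjectures}. Your second paragraph reproduces exactly that argument --- applying the isomorphism $\phi$ of Remark \ref{iso} to pass from the equality $\tilde\theta_p=([\tilde q_p]-[1])\,c$ in $Q_1(R,G_p)$ to the family of coordinate-wise congruences indexed by $\ell\in S$ --- so on the point the paper actually establishes, you and the paper agree.

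Two minor remarks. First, your direct verification of $\theta_p\in I(R,G_p)$ via the $U_p$-relation (or equivalently the Bergunde--Gehrmann theorem) is fine and is indeed noted in the paper (Remark \ref{introremark}), but in the paper's presentation this membership is simply part of the conjectural package inherited from Conjecture \ref{con5}; singling it out as unconditionally known is a small refinement on your part. Second, your third paragraph --- the sketch of how one might attack Conjecture \ref{con5} via exceptional-zero formulas and the BSD constant --- goes well beyond anything the paper attempts; it is interesting commentary but not a proof, as you yourself acknowledge. Since the statement is a conjecture, no complete argument is expected here, and you correctly identify that the obstruction is the full rank-$0$ BSD formula (plus finiteness of $\Sha_E$, which is implicit in even writing down the exponent $c$). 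Your closing observation that the built-in hypothesis $\tau^{-1}\in R$ automatically inverts the torsion primes, and hence sidesteps the failure mode the paper finds for Conjectures \ref{conjintro} and \ref{conj4mul}, is exactly the point the paper makes in the remark following Conjecture \ref{conj4mod}.
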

Again, using the same procedure we have that, if $S_m = \{p\}$ and $\trk_{\bZ}(E(\bQ)) > 0$, then Conjecture \ref{conj4} implies the following conjecture.
\begin{conjecture}\label{conj4mul}
	Assume that $\trk_{\bZ}(E(\bQ)) > 0$ and that $S_m = \{p\}$. Let $R\subseteq \bQ$ be the smallest subring containing $\{\modsymb{a}{p}\}_{a\in G_p}$. Define $S\subseteq \cP$ as follows: $\ell\in S$ if and only if $\ell \mid \# G_p$ and $\ell^{-1}\notin R$. Then $\theta_p \in I(R,G_p)$, and for all $\ell\in S$ we have that
	\[
		\prod_{a\in G_p} \pi_{\ell}(a)^{\modsymb{a}{p}} \equiv 1,
	\]
	where $\pi_{\ell}:G_p \rightarrow \tSyl_{\ell}(G_p)$ denotes the natural projection given by the Sylow decomposition.
\end{conjecture}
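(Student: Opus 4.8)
The plan is to derive this statement from Conjecture \ref{conj4}, specialized to $S_m=\{p\}$ and $M=p$, by the same translation used in the proof of Lemma \ref{equivalence_conjectures}. First I would note that $r=\#S_m=1$ and that, for $M=p$, the ring $R$ of Conjecture \ref{conj4} (the smallest subring of $\bQ$ containing $\{\modsymb{a}{p}\}_{a\in G_p}$) coincides with the ring in Conjecture \ref{conj4mul}; likewise the set $S$ and the projections $\pi_\ell$ match verbatim. Granting Conjecture \ref{conj4}, one gets $\tord(\theta_p)\geq r+\trk_{\bZ}(E(\bQ))=1+\trk_{\bZ}(E(\bQ))$, and since $\trk_{\bZ}(E(\bQ))>0$ by hypothesis this is $\geq 2$. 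By the definition of the vanishing order this means $\theta_p\in I(R,G_p)^2\subseteq I(R,G_p)$, and the inclusion $\theta_p\in I(R,G_p)$ is by definition the identity $\sum_{a\in G_p}\modsymb{a}{p}=0$, which is the first assertion.

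For the congruences, I would use the fact that $\theta_p\in I(R,G_p)^2$ forces the image $\tilde\theta_p$ of $\theta_p$ in $Q_1(R,G_p)=I(R,G_p)/I(R,G_p)^2$ to be $0$. Then I would apply the explicit isomorphism $\phi\colon Q_1(R,G_p)\xrightarrow{\ \sim\ }\bigoplus_{\ell\in S}\tSyl_\ell(G_p)$ of Remark \ref{iso}. Writing $\theta_p=\sum_{a\in G_p}\modsymb{a}{p}[a]$, the $\ell$-component of $\phi(\tilde\theta_p)$ is $\prod_{a\in G_p}\pi_\ell(a)^{\modsymb{a}{p}}$; since $\tilde\theta_p=0$ and $\phi$ is an isomorphism, this element is trivial for every $\ell\in S$, i.e. $\prod_{a\in G_p}\pi_\ell(a)^{\modsymb{a}{p}}\equiv 1$, which is the second assertion. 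The products are well-defined by the remark following Conjecture \ref{conjintro}: for $\ell\in S$ one has $\ell^{-1}\notin R$, so the denominators occurring in the $\modsymb{a}{p}$ are prime to $\ell$, and Lemma \ref{Lemma2:2}(\ref{nontrivial}) provides the $R$-module structure on $\tSyl_\ell(G_p)$ needed to make sense of $\pi_\ell(a)^{\modsymb{a}{p}}$.

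I do not expect a genuine obstacle here; the argument is a direct specialization of Lemma \ref{equivalence_conjectures} with the ``corrected discriminant'' term absent. The only points requiring care are that the hypothesis $\trk_{\bZ}(E(\bQ))>0$ is exactly what upgrades $\tord(\theta_p)\geq 1$ to $\tord(\theta_p)\geq 2$ — so that the class in $Q_1(R,G_p)$, and hence each of its $\ell$-components, is killed — and that the data $(R,G_p,S,\pi_\ell)$ agree on the nose between Conjecture \ref{conj4} at $M=p$ and Conjecture \ref{conj4mul}, so that the isomorphism $\phi$ of Remark \ref{iso} transports the conclusion without any adjustment. This parallels the final sentence of Conjecture \ref{con2}, with the right-hand side replaced by the identity.
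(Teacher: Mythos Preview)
Your proposal is correct and matches the paper's approach: the paper simply remarks that, ``using the same procedure'' as in Lemma \ref{equivalence_conjectures}, Conjecture \ref{conj4} (with $S_m=\{p\}$, $M=p$, $\trk_{\bZ}(E(\bQ))>0$) implies Conjecture \ref{conj4mul}. Your argument spells out exactly this specialization---$\tord(\theta_p)\geq 2$ forces $\tilde{\theta}_p=0$ in $Q_1(R,G_p)$, and the isomorphism $\phi$ of Remark \ref{iso} then yields the coordinate-wise identities.
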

Note that Conjecture \ref{conj4mul} is not equivalent to Conjecture \ref{conj4}, because the previous conjecture only allow us to conclude that the vanishing order of the Mazur-Tate element is at least $2$. So, in the case when $\trk_{\bZ}(E(\bQ)) > 1$ Conjecture \ref{conj4mul} is a weaker version of Conjecture \ref{conj4}.

\begin{remark}\label{importantRemark}
	Note that with this reformulation we can see the effect that the ring $R$ has on these conjectures. Because, as more primes $\ell\mid \#G_p$ are invertible in $R$, the more ``information'' we lose, as less $\ell$-Sylow subgroups are considered in the Sylow decomposition of $G_p$. For example if $p > 2$ and $(\frac{p-1}{2})^{-1}\in R$ then by the work of Bergunde-Gehrmann \cite{BL17} (see the Theorem in the introduction), Conjecture \ref{conj4mul}, Conjecture \ref{conj5mul}, and Conjecture \ref{conjintro} hold trivially. 
\vsep
	As mentioned in Remark \ref{rmkR}, in \cite{MT87} there are few restrictions on the ring $R$, other than containing the necessary values such that the conjectures are well-defined. So, depending on how one chooses the ring $R$, some of the conjectures in \cite{MT87} will hold trivially. This is the reason that, at the moment of stating the conjectures from \cite{MT87}, we consider $R$ to be the smallest ring such that the equations are well-defined, as to avoid trivializing the conjectures.
\end{remark}

\section{Numerical results}
Numerical evidence supporting the refined conjectures of the BSD type, or variations of them, does appear to be largely absent in the literature. The original conjectures were numerically verified by Mazur and Tate \cite[\S 3.2]{MT87}, and a slight variation of a conjecture stated by Mazur and Tate \cite[Conjecture 4]{MT87} was numerically studied by Portillo-Bobadilla in his Ph.D. thesis \cite{Pob04} and in a subsequent article \cite{Pob19} (for the precise conjectures that Portillo-Bobadilla studied see Conjecture 3.2 and Conjecture 3.3 of \cite{Pob19}).
\vsep
To expand the numerical evidence of the refined conjectures of the BSD type we used SageMath to check Conjecture \ref{conj5mul}, Conjecture \ref{conj4mul}, and Conjecture \ref{conjintro}. For that, we calculated the necessary values needed to check these conjectures for $425,713$ pairs of the form $(E,p)$ where $E$ is a rational elliptic curve with split multiplicative reduction at the prime $p$. We will refer to the pairs calculated as ``database''\footnote{This was done on a personal computer using SageMath version 10.1.}. 
\vsep
The database was created by taking the list of elliptic curves given by Cremona in \cite{eclibdata}, and for each elliptic curve we considered all the primes for which the elliptic curve has split multiplicative reduction. This systematic check was done for all elliptic curves up to conductor 50,000, and a non-complete list was done for elliptic curves with conductor between 50,000 - 91,000. The elliptic curve with the highest conductor in the database has conductor 90,134\footnote{It should be noted that the reason to not continue the calculations for larger conductors, was mearly due to time restriction. This is because, as the conductor of an elliptic curve gets larger, calculating the modular symbols takes a significant amount of time.}.

Now, we will summarize our results\footnote{We use the \textit{L-functions and modular forms database} (LMFDB) convention for labeling elliptic curves. Note that this may disagree with the Cremona labeling convention from \cite{eclibdata}}:
\begin{itemize}
	\item Regarding Conjecture \ref{conjintro} we found $886$ pairs $(E,p)$ which did not satisfy the equality, some of these pairs are:
	\begin{itemize}
		\item The elliptic curve $130.a2$ and the prime $5$.
		\item The elliptic curve $680.c1$ and the prime $5$.
		\item The elliptic curve $798.d6$ and the prime $19$.		
		\item The elliptic curve $1890.i2$ and the prime $7$.
	\end{itemize}
	\item Regarding Conjecture \ref{conj4mul} we found $367$ pairs $(E,p)$ which did not satisfy the equality, some of these pairs are:
	\begin{itemize}
		\item The elliptic curve $377.a2$ and the prime $29$.
		\item The elliptic curve $832.f1$ and the prime $13$.
		\item The elliptic curve $4123.b1$ and the prime $7$.
		\item The elliptic curve $7826.b1$ and the prime $43$.
	\end{itemize}
	\item No counter-examples were found for conjecture \ref{conj5mul}.
\end{itemize}
For the full list see 

\url{https://github.com/JpLlerena/Numerical_study_of_refined_conjectures_of_the_BSD-type}

\begin{remark}
	We remark that when $2^{-1}\in R$, $\tord_p(q_p) = 1$, and $\lambda(0,1)$ is an integer, the so-called ``refined conjecture'' stated in \cite[Introduction, p. 712]{MT87} implies Conjecture \ref{conjintro}. To see this, compose the projections $\left(\bZ/p\bZ\right)^* \rightarrow G_p$ and $G_p \rightarrow \bigoplus_{\ell \in S} \tSyl_{\ell}(G_p)$ (where $S$ is defined as in Conjecture \ref{conjintro}). So, because some of the pairs $(E,p)$ that did not satisfy Conjecture \ref{conjintro} have the property that $2^{-1}\in R$, $\tord_p(q_p) = 1$, and $\lambda(0,1)$ is an integer (e.g. $1890.i2$ and the prime $7$), we can conclude that the refined conjecture also does not hold in general.
\end{remark}

\begin{remark}
	As mentioned in Remark \ref{rmkdShalit} and Remark \ref{introremark}, it is known that some cases of Conjecture \ref{conjintro} and Conjecture \ref{conj4mul} hold. So, it should be noted that our finding do not contradict the work of de Shalit \cite{dSh95}, nor the work of Bergunde-Gehrmann \cite{BL17}, as our finding fall outside the scope of their results.
\end{remark}
As mentioned in Remark \ref{rmkR}, the ring $R\subseteq \bQ$ does not have many restrictions in \cite{MT87}. However, by the numerical calculation that we found, if we consider $R$ to be the smallest ring such that Conjecture \ref{conj4mul} and Conjecture \ref{conjintro} are well-defined, then these two conjectures do not hold in general.
\vsep
Now, we reverify Conjecture \ref{conjintro} and Conjecture \ref{conj4mul} after adding the hypothesis that $(\#E(\bQ)_{\tTor})^{-1}\in R$ (where $E(\bQ)_{\tTor}$ denotes the torsion subgroup of $E(\bQ)$) which may be superflous in some cases, and may trivialize the conjectures in others (see Remark \ref{importantRemark} for an explanation of the effect of $R$ on the conjectures). However, with this additional hypothesis, we observed that all the conjectures did hold for all the pairs $(E,p)$ in the database, even for the pairs $(E,p)$ that previously did not satisfy Conjecture \ref{conjintro} or Conjecture \ref{conj4mul}. This can be thought as the failure of Conjecture \ref{conjintro} and Conjecture \ref{conj4mul} lies in the $\ell$-Sylow subgroups of $G_p$ for the primes $\ell \mid \#E(\bQ)_{\tTor}$ .
\begin{remark}
	We remark that the restriction that $(\#E(\bQ)_{\tTor})^{-1}\in R$ is mentioned in Conjecture $6$ in \cite{MT87}, but its not added as a hypothesis. However, our findings indicate that this hypothesis is necessary, not only for Conjecture $6$ in \cite{MT87} but also necessary for Conjecture $4$ in \cite{MT87}. 
\end{remark}
This naturally leads us to state the following variation of Conjecture \ref{conjintro}.
\begin{conjecture}\label{conjintromod}
	Assume that $E$ has split multiplicative reduction at $p$. Let $R\subseteq \bQ$ be the smallest subring containing $\frac{\modsymb{0}{1}}{2\tord_p(q_p)}$, $\{\modsymb{a}{p}\}_{a\in G_p}$ and $(\#E(\bQ)_{\tTor})^{-1}$. Define $S\subseteq \cP$ as follows: $\ell\in S$ if and only if $\ell \mid \# G_p$ and $\ell^{-1}\notin R$. Then $\sum_{a\in G_p} \lambda(a,p) = 0$ and for all $\ell\in S$ we have that
		\[
		\prod_{a\in G_p} \pi_{\ell}(a)^{\lambda(a,p)} \equiv \pi_{\ell}(\tilde{q}_p)^{\frac{\lambda(0,1)}{2\tord_p(q_p)}},
	\]
	where $\pi_{\ell}$ denote the natural projection $\pi_{\ell}:G_p \rightarrow \tSyl_{\ell}(G_p)$ given by the Sylow decomposition. Furthermore, if $\trk_{\bZ}(E(\bQ)) > 0$, then for all $\ell\in S$ both sides are $1$.
\end{conjecture}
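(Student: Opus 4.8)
Since Conjecture~\ref{conjintromod} is a refinement of the still‑open Conjecture~6 of \cite{MT87}, I will describe the partial results one can hope to establish and the strategy for the remainder rather than pretend to give a complete argument. The statement has two parts: the vanishing $\sum_{a\in G_p}\lambda(a,p)=0$, and the congruences $\prod_{a\in G_p}\pi_\ell(a)^{\lambda(a,p)}\equiv\pi_\ell(\tilde q_p)^{\modsymb{0}{1}/(2\tord_p(q_p))}$ for all $\ell\in S$. The first part is exactly the assertion $\theta_p\in I(R,G_p)$, which is the theorem of Bergunde and Gehrmann (the Theorem in the introduction of \cite{BL17}); it is unaffected by enlarging $R$, so nothing new is needed there. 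For the congruences, by Corollary~\ref{cor2:1} and the explicit isomorphism $\phi$ of Remark~\ref{iso} the family of congruences over $\ell\in S$ is equivalent to the single equality
\[
	\tilde\theta_p=\left([\tilde q_p]-[1]\right)\,\frac{\modsymb{0}{1}}{2\tord_p(q_p)}\quad\text{in }Q_1(R,G_p),
\]
i.e. (via Lemma~\ref{equivalence_conjectures}) to Conjecture~\ref{con2} with $M=p$, but with $R$ enlarged to contain $(\#E(\bQ)_{\tTor})^{-1}$. So the plan is to prove this image equality for the enlarged ring.

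The route I would take is to relate $\theta_p$ modulo $I(R,G_p)^2$ to the first derivative at the exceptional (trivial) zero of the $p$-adic $L$-function of $E$. Under $\varphi$ of \eqref{defiso} one has $Q_1(\bZ,G_p)\cong G_p$, and projecting further to $1+p\bZ/p^2\bZ$ the class of $\theta_p$ recovers, up to the usual interpolation factors, the Mazur--Tate--Teitelbaum derivative $L_p'(E,1)$; the Greenberg--Stevens exceptional‑zero formula then writes this derivative as (the $\mathcal{L}$-invariant) $\cdot\, L(E,1)/\Omega_E$ with $\mathcal{L}$-invariant $=\log_p(q_p)/\tord_p(q_p)$. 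Thus $[\tilde q_p]-[1]$ on the right is precisely the finite‑level shadow of the $\mathcal{L}$-invariant, and the exponent $\modsymb{0}{1}/(2\tord_p(q_p))$ is the finite shadow of $L(E,1)/\Omega_E$ (up to the normalization of $\Omega_E$ that appears in Section~2). Concretely I would adapt de~Shalit's argument \cite{dSh95}, which proves exactly this type of statement in the prime‑conductor case for the primes in $S'$, and try to (a) remove the coprimality‑to‑the‑modular‑degree hypothesis and (b) push it to general conductor. For the primes $\ell\mid\#E(\bQ)_{\tTor}$ the point of enlarging $R$ is that these are exactly the $\ell$ at which the comparison between $\Omega_E$ and the canonical period entering the $p$-adic $L$-function — and the congruences among modular symbols forced by rational torsion — can be off by a unit; inverting $\#E(\bQ)_{\tTor}$ deletes precisely those $\ell$-Sylow components, which is what the numerics in Section~3 indicate is the correct fix.

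For the rank‑positive case, if $\trk_{\bZ}(E(\bQ))>0$ then one expects $\theta_p\in I(R,G_p)^2$, so both sides collapse to $1$; this would follow from Conjecture~\ref{conj4} with $r=1$, which in turn reduces to the vanishing of $L_p(E,1)$ to order $\ge 2$ — known in many situations (exceptional zero together with functional‑equation / analytic‑rank input) but not in general.

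The main obstacle is that there is no general method to compute the Mazur--Tate element modulo $I^2$ in terms of arithmetic invariants: this is the content of the refined BSD conjectures and is open. Even the available cases (de~Shalit; the vanishing part of Bergunde--Gehrmann) rely on heavy machinery — Euler systems and Heegner points, or $\Lambda$-adic and Hida‑theoretic deformation — and, beyond merely observing that the discrepancy sits at $\ell\mid\#E(\bQ)_{\tTor}$, a genuine proof would require a conceptual identification of that error term through a careful period comparison. So I would expect only conditional or special‑case progress (prime conductor, trivial torsion, analytic rank $\le 1$), with the statement remaining conjectural — consistent with its presentation here as Conjecture~\ref{conjintromod}.
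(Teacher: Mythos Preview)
You have correctly recognized that Conjecture~\ref{conjintromod} is stated in the paper as a conjecture and is not given a proof; the paper's support for it is purely numerical (the $425{,}713$ pairs $(E,p)$ in the database all satisfy it once $(\#E(\bQ)_{\tTor})^{-1}$ is inverted), together with the equivalence via Lemma~\ref{equivalence_conjectures} and Remark~\ref{iso} that reduces it to Conjecture~\ref{con2} for the enlarged ring. So there is no ``paper's own proof'' to compare against, and your assessment that the statement remains open is accurate.

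Your write-up goes well beyond what the paper offers: the paper neither invokes Greenberg--Stevens nor attempts to adapt de~Shalit's argument, and it does not analyze the period comparison at primes dividing $\#E(\bQ)_{\tTor}$ --- it simply observes empirically that the failures of Conjecture~\ref{conjintro} always occur at such primes and proposes inverting them. Your outline of a strategy (exceptional-zero formula, $\mathcal{L}$-invariant as the finite-level shadow of $[\tilde q_p]-[1]$, period comparison explaining the torsion discrepancy) is a reasonable heuristic roadmap, but you should be clear that none of this is in the paper and none of it currently yields a proof: de~Shalit's result already needs coprimality to the modular degree and prime conductor, and removing those hypotheses is exactly the open problem. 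In short, your response is appropriate as commentary on an open conjecture, but it is not --- and cannot be, given the current state of the subject --- a proof.
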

We are naturally led to believe that the following variation of Conjecture \ref{con2} and Conjecture \ref{conj4} may hold.
\begin{conjecture}\label{conj6mod}
	Let $r \defeq \#S_m$ and denote by $R\subseteq \bQ$ the smallest subring that contains $\{\modsymb{a}{M}\}_{a\in G_M}$, $\frac{\modsymb{0}{1}}{2\prod_{p\in S_m} C_p}$, and $(\#E(\bQ)_{\tTor})^{-1}$. We have that  $\theta_M\in I(R,G_M)^r$ and if we denote by $\tilde{\theta}_M$ the image of $\theta_M$ in $Q_r(R,G_M)$ then
		\begin{equation}\label{eqcon6mod}
			\tilde{\theta}_M \equiv \left(\prod_{p\in S_m} [\tilde{q}_p] - [1]\right) \frac{\modsymb{0}{1}}{2\prod_{p\in S_m} C_p} \text{ in } Q_r(R, G_M),
		\end{equation}
		If $\trk_{\bZ}(E(\bQ)) > 0$, then both sides are $0$.
\end{conjecture}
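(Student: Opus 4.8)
Being a conjecture, this admits only a plan of attack: the unconditional statement contains a group-ring refinement of the full Birch--Swinnerton-Dyer formula and is therefore open, but the reduction below isolates the essential difficulty and indicates which partial results should be within reach.

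\textbf{Reduction to Sylow components.} By Proposition \ref{Prop2:2} applied with $n=r$ and $G=G_M$, we have $Q_r(R,G_M)\cong\bigoplus_{\ell\in S}\tSyl_\ell\!\left(Q_r(\bZ,G_M)\right)$, so \eqref{eqcon6mod} is equivalent to one identity in each finite $\ell$-group $\tSyl_\ell(Q_r(\bZ,G_M))$, $\ell\in S$. The hypothesis $(\#E(\bQ)_{\tTor})^{-1}\in R$ forces every $\ell\in S$ to be coprime to $\#E(\bQ)_{\tTor}$, and, since $\frac{\modsymb{0}{1}}{2\prod_{p\in S_m}C_p}\in R$, the algebraic scalar on the right-hand side is automatically an $\ell$-unit for $\ell\in S$. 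So it suffices to fix $\ell\in S$ and prove the $\ell$-part, and one may in practice restrict to $\ell>3$ (the cases $\ell\in\{2,3\}$ being usually ruled out already by the denominators of the modular symbols).

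\textbf{Order of vanishing.} The assertion $\theta_M\in I(R,G_M)^r$ is the $\#S_m$-fold generalization of the Bergunde--Gehrmann theorem (the Theorem in the introduction of \cite{BL17}), which yields the case $r=1$, namely $\sum_{a\in G_p}\lambda(a,p)=0$. The natural route is induction on $\#S_m$: each $p\in S_m$ has split multiplicative reduction and so produces an exceptional (trivial) zero of the corresponding $p$-adic $L$-function, and $\theta_M$ is, up to the known interpolation formula, the image of a $\Lambda$-adic object vanishing to order $\geq r$ at the trivial character; factoring $\theta_M$ through $G_M\twoheadrightarrow\prod_{p\in S_m}G_{p^{e_p}}$ and feeding in the single-prime statement at each factor should give the bound.

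\textbf{The leading term, and the main obstacle.} For \eqref{eqcon6mod} itself the plan is: (i) identify the $\ell$-component of $\tilde\theta_M$ with the leading term of a cyclotomic $p$-adic $L$-function built from the modular symbols; (ii) invoke Iwasawa main-conjecture input (Kato's Euler system for one divisibility, Skinner--Urban where the reverse divisibility is available) to match the non-exceptional part of this leading term with $\#\Sha_E\prod_{p\notin S_m}C_p/\tau^2$, i.e.\ with $\frac{\modsymb{0}{1}}{2\prod_{p\in S_m}C_p}$ up to the usual factor of $2$ relating $\Omega_E$ to the N\'eron period and up to the Manin constant, all $\ell$-units for $\ell\in S$; (iii) show the $r$ trivial zeros contribute precisely the factor $\prod_{p\in S_m}([\tilde q_p]-[1])$, each $[\tilde q_p]-[1]$ being the finite-level shadow of the $p$-adic period, by a Greenberg--Stevens / Bertolini--Darmon--Venerucci ``derivative equals $\cL$-invariant times algebraic part'' argument transported to the group-ring setting as de Shalit does in \cite{dSh95}. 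The hard points are twofold: the presence of $\#\Sha_E$ makes the unconditional statement as strong as BSD, so in general only an ``$\ell$-part, conditional on the relevant main conjecture'' version is realistic; and matching the trivial-zero contribution with the precise element $[\tilde q_p]-[1]$ --- rather than its image in some coarser quotient --- requires a finite-level sharpening of the exceptional-zero formalism, and this is exactly where the torsion primes intervene, which is why $(\#E(\bQ)_{\tTor})^{-1}\in R$ was added. A concrete achievable target: for $E$ of prime conductor, revisit de Shalit's argument with this torsion hypothesis to try to remove the ``coprime to the modular degree'' restriction in his set $S'$ (Remark \ref{rmkdShalit}), which would settle Conjecture \ref{conjintromod} for prime conductor.
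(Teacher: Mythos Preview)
The paper does not prove this statement: Conjecture~\ref{conj6mod} is presented purely as a conjecture, motivated by the numerical evidence of Section~3 in the special case $\#S_m=1$, $M=p$. You correctly recognize this and offer a proof \emph{strategy} rather than a proof, which is the honest response.

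Your strategy goes considerably further than anything the paper attempts. The paper's only theoretical input toward Conjecture~\ref{conj6mod} is (a) the reduction, via Corollary~\ref{cor2:1} and Lemma~\ref{equivalence_conjectures}, of the $r=1$ case to the multiplicative formulation of Conjecture~\ref{conjintromod}, and (b) the citations of Bergunde--Gehrmann for $\theta_p\in I(R,G_p)$ and of de~Shalit for a partial result under extra hypotheses. Your Sylow-component reduction via Proposition~\ref{Prop2:2} is exactly (a) generalized to arbitrary $r$; your outline of the order-of-vanishing argument (induction on $\#S_m$ through trivial zeros) and of the leading-term argument (Iwasawa main conjecture plus a Greenberg--Stevens style identification of the $\cL$-invariant with $[\tilde q_p]-[1]$) is a plausible research programme, but none of it is carried out or even sketched in the paper, which is a numerical study.

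One caution on your aside that ``one may in practice restrict to $\ell>3$'': this is not safe in general. The hypothesis $(\#E(\bQ)_{\tTor})^{-1}\in R$ removes $\ell\in\{2,3\}$ from $S$ only when $\ell$ divides the torsion order; for torsion-free $E$ with $2$ or $3$ dividing $\#G_M$ and not already inverted by the modular-symbol denominators, these primes remain in $S$. The de~Shalit argument you invoke explicitly excludes $\ell\le3$, and the exceptional-zero formalism has well-known difficulties there, so these are likely the hardest cases rather than a technicality.
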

\begin{conjecture}\label{conj4mod}
	Let $r \defeq \#S_m$. Assume that $R\subseteq \bQ$ is the smallest subring containing $\{\modsymb{a}{M}\}_{a\in G_M}$ and $(\#E(\bQ)_{\tTor})^{-1}$. Then $\tord(\theta_M)\geq r + \trk_{\bZ}(E(\bQ))$.
\end{conjecture}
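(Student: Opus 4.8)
We outline a possible approach. Since the coefficient ring $R$ here contains the coefficient ring of Conjecture \ref{conj4} (it moreover contains $(\#E(\bQ)_{\tTor})^{-1}$), and enlarging the coefficient ring can only make it easier for $\theta_M$ to lie in a given power of the augmentation ideal, Conjecture \ref{conj4} would imply the present statement; the aim of the plan is therefore to isolate the role of the extra hypothesis $(\#E(\bQ)_{\tTor})^{-1}\in R$. By the definition of $\tord(\theta_M)$ one must show that the image of $\theta_M$ in $Q_n(R,G_M)$ vanishes for every $n<r+\trk_{\bZ}(E(\bQ))$. By Lemma \ref{prop1:1} and Proposition \ref{Prop2:2} we have $Q_n(R,G_M)\cong\bigoplus_{\ell\in S}\tSyl_{\ell}(Q_n(\bZ,G_M))$, where $\ell\in S$ iff $\ell\mid\#G_M$ and $\ell^{-1}\notin R$; since $(\#E(\bQ)_{\tTor})^{-1}\in R$, every $\ell\in S$ satisfies $\ell\nmid\#E(\bQ)_{\tTor}$, and $\modsymb{a}{M}\in\bZ_\ell$ for all $a$ by the definition of $S$. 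Thus it suffices to show, for each such $\ell$, that the image of $\theta_M$ in the $\ell$-Sylow component of $Q_n(\bZ,G_M)$ vanishes whenever $n<r+\trk_{\bZ}(E(\bQ))$.

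\emph{The order-$r$ contribution.} First one shows $\theta_M\in I(R,G_M)^{r}$. When $r=1$ (that is, $S_m=\{p\}$ and $M=p$) this is exactly the relation $\sum_{a\in G_p}\lambda(a,p)=0$ proved by Bergunde and Gehrmann (\cite{BL17}). For composite $M=\prod_{p\in S_m}p^{e_p}$ one wants the analogous vanishing to order $r=\#S_m$, which should follow from the exceptional-zero behaviour of $\theta_M$ at each split multiplicative prime of $S_m$ --- each contributing one unit of vanishing, in the spirit of \cite{MTT96} --- combined with the reduction above to the primes $\ell\in S$.

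\emph{The rank contribution.} To upgrade the order from $r$ to $r+\trk_{\bZ}(E(\bQ))$ one compares $\theta_M$ with the arithmetic of $E$ through an Euler-system or descent argument: a compatible family of global cohomology classes (for instance Kato's zeta elements, or a fine-structure analysis of Mazur--Tate elements along the lines of Kurihara's refined Iwasawa theory) should bound the relevant Selmer group $\ell$-adically and force the image of $\theta_M$ in $Q_n(R,G_M)$ to vanish for $n<r+\trk_{\bZ}(E(\bQ))$. This is precisely where the torsion hypothesis is used: the local Euler-characteristic discrepancies responsible for the numerical failures of Conjecture \ref{conj4mul} are supported on the $\ell$-parts with $\ell\mid\#E(\bQ)_{\tTor}$, and after the reduction above those components no longer occur in $Q_n(R,G_M)$.

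\emph{Expected main obstacle.} The genuine difficulty lies in the rank contribution when $\trk_{\bZ}(E(\bQ))\geq 2$: just as for the classical inequality $\tord_{s=1}L(E,s)\geq\trk_{\bZ}(E(\bQ))$, no Euler-system argument is presently known that produces the rank lower bound in higher rank, and even when $\trk_{\bZ}(E(\bQ))\leq 1$ one must transport the Gross--Zagier--Kolyvagin input down to the finite-level element $\theta_M$, which requires controlling congruences between modular symbols and the relevant global points. A secondary obstacle is making the order-$r$ step uniform in the number and ramification of the primes of $S_m$, where several exceptional zeros interact (cf. \cite[p. 749]{MT87}).
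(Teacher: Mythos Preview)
The statement you are trying to prove is labeled a \emph{Conjecture} in the paper, and the paper does not offer a proof of it. It is proposed purely on the basis of the numerical experiments described in Section~3: after observing failures of Conjecture~\ref{conj4mul} and tracing them to primes dividing $\#E(\bQ)_{\tTor}$, the authors simply \emph{state} Conjecture~\ref{conj4mod} as the natural correction. There is therefore no ``paper's own proof'' to compare against.

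As for your outline itself: it is not a proof but a research programme, and you are candid about this. A few remarks are still in order. Your order-$r$ step invokes Bergunde--Gehrmann only for $r=1$, $M=p$, and then appeals to an ``exceptional-zero'' heuristic for general $M$; that step is genuinely open in the generality stated here and cannot be waved through. Your rank step is, as you say, at least as hard as the classical inequality $\tord_{s=1}L(E,s)\ge\trk_{\bZ}(E(\bQ))$, which is not known for $\trk_{\bZ}(E(\bQ))\ge 2$; invoking Kato or Kurihara-style machinery does not bypass this, and the sentence ``should bound the relevant Selmer group $\ell$-adically and force the image of $\theta_M$ in $Q_n(R,G_M)$ to vanish'' hides exactly the missing content. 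Finally, your opening observation that Conjecture~\ref{conj4} would imply Conjecture~\ref{conj4mod} is correct but unhelpful, since the paper's own numerical evidence shows Conjecture~\ref{conj4} fails in general; the whole point of Conjecture~\ref{conj4mod} is that it is the weaker, possibly-true, statement.
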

\begin{remark}
	Note that we do not have to make any modification to Conjecture \ref{con5}, as the condition that $(\#E(\bQ)_{\tTor})^{-1}\in R$ is already imposed and we did not find any counter-examples.
\end{remark}
We want to emphasize that the evidence for Conjecture \ref{conj6mod} and Conjecture \ref{conj4mod} done in this article is only in a particular case \textit{i.e.} when the layer is $p$ and $E$ has split multiplicative reduction at $p$. Further calculations are needed to support these conjectures.
\begin{remark}
	We have to remark on the reliability of some arithmetic invariants in LMFDB and SageMath, in particular the order of the Tate-Shafarevich group\footnote{For more information about the reliability of the data, see \url{https://www.lmfdb.org/EllipticCurve/Q/Reliability}}. When verifying Conjecture \ref{conj5mul} we assumed that the order of the Tate-Shafarevich group given by SageMath was correct. However, in some cases, SageMath assumes the Strong BSD conjecture to calculate the order of the Tate-Shafarevich group. Thus, our results regarding Conjecture \ref{conj5mul} are correct ``up-to'' a possible flaw in the calculation of the order of the Tate-Shafarevich group.
\end{remark}
We also highlight that, at the moment of creating the database, we used the ECLIB implementation for calculating modular symbols, which is based on the work of Cremona \cite{Cre97}. However, for all the pairs $(E,p)$ in the database that did not satisfy one of the conjectures we recalculated the modular symbols, when possible\footnote{We had some technical difficulties with some of the implementations and some pairs $(E,p)$ in our database. For more information about these limitations, see the ``Disclaimer'' section on \url{https://github.com/JpLlerena/Numerical_study_of_refined_conjectures_of_the_BSD-type}}, using two other implementations in SageMath, one being based on the work of Stein \cite{Ste07}, and the another based on the work of Wuthrich \cite{Wut18}\footnote{This was done on the latest version of SageMath available, at the moment of writing, SageMath 10.4.}. We did not find any discrepancies in the values given by these three implementations.
\vsep
All the code used in this article, together with detailed explanations, can be found in the following GitHub repository 

\small{\url{https://github.com/JpLlerena/Numerical_study_of_refined_conjectures_of_the_BSD-type}}.
\section{Examples}
In this section, we give two examples of how the calculations can be done to check the conjectures, one for Conjecture \ref{conjintro} and one for Conjecture \ref{conj4mul}. We will use the same notation as in the previous sections.
\vsep
For Conjecture \ref{conj4mul}, consider the case of the elliptic curve $4123.b1$ and the prime $7$. Using the LMFDB website \footnote{\url{https://www.lmfdb.org/}}, we can see that this elliptic curve has split multiplicative reduction at $7$. Using the following command in SageMath, we obtain the values of the modular symbols $\lambda(1,7), \lambda(2,7)$, and $\lambda(3,7)$:
\begin{lstlisting}[language=Python,  style=mystyle]
	Input: 
	M = EllipticCurve("4123.b1").modular_symbol(+1, implementation='eclib')
	(M(1/7), M(2/7), M(3/7))
	Sagemath Output:
	(0, -1/2, 1/2)
\end{lstlisting}
However, this elliptic curve has negative discriminant, so its N\'eron lattice is non-rectangular. In this case, SageMath consider the smallest real positive period, which differs from the convention in \cite{MT87} of considering half the real period when the N\'eron lattice is non-rectangular. Therefore, we have to multiply the previous values by $2$. This allows us to see that all the modular symbols are integers. Thus $R = \bZ$.
\vsep
Calculating the expected equality of Conjecture \ref{conj4mul} we get the contradiction:
\begin{equation}\label{ex1eq}
	5 \equiv 1^{0}2^{-1}3^{1} \equiv 1 \hsep \hsep \text{ in } \hsep \left(\bZ/7\bZ\right)^*/\left<-1\right>. 
\end{equation}
However, we have that $5^3 \equiv 1^3$ in $\left(\bZ/7\bZ\right)^*/\left<-1\right>$. So, we get an equality if we cube both sides of equation \eqref{ex1eq}. This means that the failure of the conjecture lies in the $3$-Sylow subgroup (note that the kernel of the morphism $g \mapsto g^3$ in $G_7$ is precisly the $3$-Sylow subgroup). This coincides with the fact that $3$ divides $\#E(\bQ)_{\tTor} = 3$.
\vsep
In the case of Conjecture \ref{conjintro}, consider the case of the elliptic curve $680.c1$ and the prime 5. With the LMFDB website, we can see that this elliptic curve has split multiplicative reduction at $5$. Using the same command as before, to calculate $\lambda(0,1), \lambda(1,5)$, and $\lambda(2,5)$, we get:
\begin{lstlisting}[language=Python,  style=mystyle]
	Input: 
	M = EllipticCurve("680.c1").modular_symbol(+1, implementation='eclib')
	(M(0),M(1/5),M(2/5))
	Sagemath Output:
	(8, 0, 0)
\end{lstlisting}
In this case the discriminant is positive. Thus, we do not need to multiply these numbers by $2$.
\vsep
Now, with the following command in SageMath, we can get the $p$-adic expansion of the $p$-adic period (up to 5 digits):
\begin{lstlisting}[language=Python,  style=mystyle]
	Input: 
	EllipticCurve("680.c1").tate_curve(5).parameter(5)
	Sagemath Output:
	2*5^4 + 3*5^5 + 2*5^6 + 4*5^7 + 3*5^8 + O(5^9)
\end{lstlisting}
We can see that $\frac{\lambda(0,1)}{2\tord_5(q_5)} = 1$. Therefore, $R = \bZ$.
\vsep
Now, calculating the expected equality of Conjecture \ref{conjintro}, we get the following contradiction
\begin{equation}\label{lasteq}
	1^0 2^{0} \equiv 2^1\hsep \hsep \text{ in } \hsep \left(\bZ/5\bZ\right)^*/\left<-1\right>. 
\end{equation}
However, we can observe that $1^2 \equiv 2^2$ in $\left(\bZ/5\bZ\right)^*/\left<-1\right>$. Thus, after squaring equation \eqref{lasteq}, we get an equality. Again, this can be thought as the failure of the conjecture lies in the $2$-Sylow subgroup. This coincides with the fact that $2$ divides $\#E(\bQ)_{\tTor} = 2$. 
\bibliographystyle{alpha} 
\bibliography{Article_Thesis} 
\end{document}